\theoremstyle{plain}
\newtheorem{theorem}{Theorem}[section]
\newtheorem{lemma}[theorem]{Lemma}
\theoremstyle{definition}
\newtheorem{definition}[theorem]{Definition}
\newtheorem{remark}[theorem]{Remark}
\numberwithin{equation}{section}
\newcommand\naturals{\mathbb{N}}
\newcommand\integers{\mathbb{Z}}
\newcommand\reals{\mathbb{R}}
\newcommand\norm[1]{\left\lVert#1\right\rVert}
\newcommand\ev[2]{\left<#1,#2\right>}
\DeclareMathOperator{\supp}{supp}
\newcommand\littleoh[1]{o\left(#1\right)}
\newcommand\ds{\mathcal{D}}
\newcommand\es{\mathcal{E}}
\newcommand\sss{\mathcal{S}}
\newcommand\beurling[2]{\left(#1_{#2}\right)}
\newcommand\beurlingM{\beurling{M}{p}}
\newcommand\roumieu[2]{\left\{#1_{#2}\right\}}
\newcommand\roumieuM{\roumieu{M}{p}}
\newcommand\beurou{\ast}
\newcommand\utestspace[3]{#1^{#2}\left(#3\right)}
\newcommand\utestspacereals[3]{\utestspace{#1}{#2}{\reals^{#3}}}
\newcommand\utestspacereal[2]{#1^{#2}}
\newcommand\udspace[2]{\utestspace{\ds}{#1}{#2}}
\newcommand\udspacereals[2]{\utestspacereals{\ds}{#1}{#2}}
\newcommand\udspacereal[1]{\utestspacereal{\ds}{#1}}
\newcommand\uespacereals[2]{\utestspacereals{\es}{#1}{#2}}
\newcommand\uespacereal[1]{\utestspacereal{\es}{#1}}
\newcommand\uddspace[2]{\utestspace{\ds}{#1 \prime}{#2}}
\newcommand\uddspacereal[1]{\utestspacereal{\ds}{#1 \prime}}
\newcommand\udespacereal[1]{\utestspacereal{\es}{#1 \prime}}
\begin{document}

\title[Quasiasymptotics of ultradistributions]{Structural theorems for quasiasymptotics of ultradistributions}

\author[L. Neyt]{Lenny Neyt}

\author[J. Vindas]{Jasson Vindas}

\address{Department of Mathematics: Analysis, Logic and Discrete Mathematics\\ Ghent University\\ Krijgslaan 281\\ 9000 Gent\\ Belgium}
\email{lenny.neyt@UGent.be, jasson.vindas@UGent.be}

\thanks{The authors  gratefully acknowledge support by Ghent University, through the BOF-grant 01J11615.}

\subjclass[2010]{41A60, 46F05, 46F10}

 \keywords{Quasi-asymptotic behavior; ultradistributions; regularly varying functions}

\begin{abstract}
We provide complete structural theorems for the so-called quasiasymptotic behavior of non-quasianalytic ultradistributions.  As an application of these results, we obtain descriptions of quasiasymptotic properties of regularizations at the origin of ultradistributions and discuss connections with Gelfand-Shilov type spaces.

\end{abstract}

\maketitle

\section{Introduction}

Several asymptotic notions play a fundamental role in the theory of generalized functions. The subject has been studied by several authors and applications have been elaborated in areas such as mathematical physics, Tauberian theorems for integral transforms, number theory, and differential equations. See the monographs \cite{estrada2012distributional, p-s-t, stevan2011asymptotic, vladimirov1988tauberian} for an overview of results and the articles \cite{drozhzhinov2016,P-V2014,yang-estrada} for recent contributions. 

The purpose of this article  is to present a detailed structural study of the so-called quasiasymptotics of ultradistributions. The concept of quasiasymptotic behavior for Schwartz distributions was introduced by Zav'yalov in \cite{Zavyalov} and further developed by him, Drozhzhinov, and Vladimirov in connection with their powerful multidimensional Tauberian theory for Laplace transforms \cite{vladimirov1988tauberian}. A key aspect in the understanding of this concept is its description via so-called structural theorems and complete results in that direction were achieved in \cite{StructThQADistrInf,StructThQADistrOrigin} (cf. \cite{lojasiewicz57,stevan2011asymptotic}). In \cite{10.2307/44095755} Pilipovi{\'c} and Stankovi{\'c} naturally extended the definition of quasiasymptotic behavior to the context of one-dimensional ultradistributions and studied its basic properties. We shall obtain here complete structural theorems for quasiasymptotics of non-quasianalytic ultradistributions that generalize their distributional counterparts. Our main goal is thus to characterize those ultradistributions having quasiasymptotic behavior as infinite sums of derivatives of functions satisfying classical pointwise asymptotic relations. 

The paper is organized as follows. In Section \ref{preliminaries section} we explain some notions and tools that will play a role in our arguments. Section \ref{Section structure at infinity} studies the quasiasymptotic behavior at infinity. A key idea we apply here will be to connect the quasiasymptotic behavior with the so-called $S$-asymptotic behavior \cite{stevan2011asymptotic}, for which structural theorems are available, via an exponential change of variables. The nature of the problem under consideration requires to split our treatment in two cases, depending on whether the degree of the quasiasymptotic behavior is a negative integer or not.  We obtain in Section \ref{structure quasiasymptotic at the origin} structural theorems for the quasiasymptotic behavior at the origin. Our technique there is based on a reduction to the results from Section \ref{Section structure at infinity} by means of a change of variables and then regularization. Our method also yields asymptotic properties of regularizations at the origin of ultradistributions having prescribed asymptotic properties, generalizing results for distributions from \cite{VindasReg2011}. It is also worth mentioning that our approach here differs from the one employed in the literature to deal with Schwartz distributions, and in fact can be used to produce new proofs for the classical structural theorems for the quasiasymptotic behavior of distributions. We conclude the article by studying extensions of quasiasymptotics to new ultradistributions spaces of Gelfand-Shilov type that we shall introduce in Section \ref{quasiasymptotics Gelfand-Shilov}.

\section{Preliminaries}
\label{preliminaries section}

Throughout this article we fix
a weight sequence of positive  numbers $\{ M_{p} \}_{p \in \naturals}$
and assume it satisfies $(M.1)$, $(M.2)$, and $(M.3)$;
the meaning of all these conditions is very well explained in \cite{ultradistributions1}. Let $\Omega\subseteq\mathbb{R}$.
As customary \cite{ultradistributions1}, $\mathcal{E}^{\ast}(\Omega)$ and $\mathcal{D}^{\ast}(\Omega)$ stand, respectively, for the spaces of all ultradifferentiable functions and compactly supported ultradifferentiable functions of class $\ast$ on $\Omega$, where we employ $\ast$ as
 the common notation for $\beurlingM$ and $\roumieuM$. For statements needing a separate treatment we will always talk first about the Beurling case, followed by the corresponding assertion for the Roumieu case in parenthesis. When $\Omega = \reals$, we simply write $\udspacereal{\beurou} = \udspacereals{\beurou}{}$ and $\uespacereal{\beurou} = \uespacereals{\beurou}{}$. The strong duals $\uddspacereal{\beurou}(\Omega)$ and $\udespacereal{\beurou}(\Omega)$ are the spaces of ultradistributions and compactly supported ultradistributions, respectively, on $\Omega$. 

The main subject of study of this article is the quasiasymptotic behavior of ultradistributions, which is defined via asymptotic comparison with regularly varying functions. A real-valued measurable function $L$ is called \textit{slowly varying at infinity} \cite{bingham1989regular} if $L$ is positive for large arguments and
$L(ax)\sim L(x)$ as $x\to\infty$,
for any $a > 0$. 
We are only interested in the terminal behavior of $L$, so \cite{bingham1989regular} we may always assume $L$ to be defined, positive, and locally bounded (or even continuous) on $[0, \infty)$. Finally, we say that a function $L$ on $(0, \infty)$ is \emph{slowly varying at the origin} if $\widetilde{L}(x):=L(x^{-1})$ is slowly varying at infinity.

In accordance to \cite{10.2307/44095755,stevan2011asymptotic}, we define the quasiasymptotic behavior of an ultradistribution at infinity or at the origin as follows.

	\begin{definition}
		Let $L$ be a slowly varying function at infinity (at the origin, resp.). We say that $f \in \uddspacereal{\beurou}$ has \textit{quasiasymptotic behavior at infinity (at the origin) in $\uddspacereal{\beurou}$ with respect to $\lambda^{\alpha} L(\lambda)$}, $\alpha \in \reals$, if for some $g \in \uddspacereal{\beurou}$ and every $\phi \in \udspacereal{\beurou}$,
			\begin{equation}
				\label{eq:quasiasympinfinity}
				\lim_{\lambda \rightarrow \infty} \ev{\frac{f(\lambda x)}{\lambda^{\alpha} L(\lambda)}}{\phi(x)} = \ev{g(x)}{\phi(x)} \qquad \left(\mbox{resp., } \lim_{\lambda \rightarrow 0^{+}}\right) .
			\end{equation}
		If (\ref{eq:quasiasympinfinity}) holds, we also say that $f$ has \textit{quasiasymptotics of order $\alpha$ at infinity (at the origin) with respect to $L$} and write in short: $f(\lambda x) \sim \lambda^{\alpha} L(\lambda) g(x)$ in $\uddspacereal{\beurou}$ as $\lambda \rightarrow \infty$ (resp., $\lambda\to0^{+}$).
	\end{definition}

If $f(\lambda x) \sim \lambda^{\alpha} L(\lambda) g(x)$ in $\uddspacereal{\beurou}$ as $\lambda \rightarrow \infty$ (as $\lambda \rightarrow 0^{+}$), it can easily be shown \cite{estrada2012distributional,stevan2011asymptotic} that this forces $g$ to be a homogeneous ultradistribution of degree $\alpha$. An adaptation of the proof of \cite[Theorem 2.6.1]{estrada2012distributional} shows that all homogeneous ultradistributions are exactly the homogeneous distributions, which we denote exactly as in \cite{estrada2012distributional}. 
We mention that we will employ the notation $H(x)=x^{0}_{+}$ for the Heaviside function. 
In addition, we shall make use of the special (non homogeneous!) distributions $\operatorname*{Pf}(H(\pm) x^{-k})$, $k\in\mathbb{Z}_{+}$, where $\operatorname*{Pf}$ stands for Hadamard finite part regularization \cite{estrada2012distributional}.

\begin{remark}	Naturally \cite{stevan2011asymptotic}, the quasiasymptotic behavior may be defined in other spaces of generalized functions $\mathcal{F}'$ by asking (\ref{eq:quasiasympinfinity}) to hold for any $\phi \in \mathcal{F}$, whenever the dialation operators act continuously on the test function space.
\end{remark}

\section{The structure of quasiasymptotics at infinity}
\label{Section structure at infinity}
	
This section is devoted to studying the quasiasymptotic behavior at infinity. Our main results are Theorem \ref{t:structalpha>-1} and Theorem \ref{t:fstructnid}, where we provide a full description of the structure of quasiasymptotics at infinity. Some auxiliary lemmas used in their proofs are shown in Subsection \ref{Subsection some lemmas}. Throughout this section $L$ stands for a slowly varying function at infinity.

\subsection{Some lemmas}
\label{Subsection some lemmas}
We start with the ensuing useful estimates for the weight sequence $M_p$, which we shall often exploit throughout the article. Hereafter $S(n,k)$ stand for the \textit{Stirling numbers of the second kind} (see e.g. \cite{Johnson02thecurious}).

	\begin{lemma}
		\label{l:Msumfacgrowth}
		 For any $\ell > 0$ there is $C_{\ell} > 0$ (independent of $p$) such that
			\begin{equation}
				\label{eq:Msumfacgrowth}
				\sum_{k = p}^{\infty} \frac{k! \ell^{k}}{M_{k}} \leq C_{\ell} \frac{p!}{M_{p}} \ell^{p}  
			\end{equation}
and
\begin{equation}
				\label{eq:Msumgrowth}
				\sum_{k = p}^{\infty} S(k + 1, p + 1) \frac{\ell^{k}}{M_{k}} \leq C_{\ell} \frac{(2\ell)^{p}}{M_{p}} .
			\end{equation}
	\end{lemma}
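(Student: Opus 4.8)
The plan is to prove both inequalities using the standard consequences of the conditions $(M.1)$ (logarithmic convexity), $(M.2)$ (stability under ultradifferential operators), and $(M.3)$ (strong non-quasianalyticity), the most useful being the following: by $(M.2)$ there exist constants $A, H > 0$ such that $M_p \leq A H^p M_q M_{p-q}$ for all $q \leq p$, and dually $M_{p+1} \leq A H^{p+1} M_p$ (so consecutive ratios $M_{p+1}/M_p$ grow at most geometrically), while $(M.3)$ gives that $\sum_{q \geq p} M_{q-1}/M_q < \infty$ with a tail bound of the form $\sum_{q \geq p} M_{q-1}/M_q \leq C\, p\, M_{p-1}/M_p$. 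I would also recall the elementary fact $p! \leq M_p$-type comparisons are \emph{not} assumed, but $(M.1)$ makes $p \mapsto M_p/M_{p-1}$ nondecreasing, which is the real engine here.

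For \eqref{eq:Msumfacgrowth}: write $a_k = k!\,\ell^k/M_k$ and examine the ratio $a_{k+1}/a_k = (k+1)\ell \cdot M_k/M_{k+1}$. Since $M_k/M_{k+1}$ is nonincreasing in $k$ by $(M.1)$, and — using $(M.3)'$ or rather the consequence that $k M_{k-1}/M_k \to 0$, equivalently $(k+1)M_k/M_{k+1} \to 0$ — the ratio $a_{k+1}/a_k \to 0$ as $k \to \infty$. Hence for $k$ beyond some threshold $k_0 = k_0(\ell)$ we have $a_{k+1} \leq \tfrac12 a_k$, so the tail $\sum_{k \geq p} a_k$ is dominated by a geometric series summing to at most $2 a_p$ once $p \geq k_0$; for $p < k_0$ one absorbs the finitely many initial terms into the constant $C_\ell$ (crudely bounding $\sum_{k\geq p} a_k \leq \sum_{k \geq 0} a_k =: C'_\ell < \infty$, which is finite by the same ratio test, and noting $p!\ell^p/M_p \geq $ a positive constant depending only on $\ell, k_0$ on that finite range). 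This yields \eqref{eq:Msumfacgrowth} with a single constant $C_\ell$.

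For \eqref{eq:Msumgrowth}: the key combinatorial input is the bound on Stirling numbers of the second kind $S(k+1, p+1) \leq \binom{k+1}{p+1}(p+1)^{k-p} \leq \tfrac{(k+1)!}{(p+1)!\,(k-p)!}(p+1)^{k-p}$, or more simply $S(k+1,p+1) \leq 2^k \cdot \frac{k!}{p!}\cdot\frac{1}{(k-p)!}$-type estimates — I would pick whichever makes the sum telescope cleanly against \eqref{eq:Msumfacgrowth}. Substituting a bound of the shape $S(k+1,p+1) \leq C\, 2^{k-p}\, \binom{k}{p}\, p!/ ?$ and reorganizing, the left side of \eqref{eq:Msumgrowth} gets compared to $\sum_{k \geq p} (\text{something})\cdot \ell^k/M_k$; the factor $2^k$ is what converts $\ell$ into $2\ell$ in the final bound, and after reindexing $k \mapsto k + p$ one reduces to an application of \eqref{eq:Msumfacgrowth} (with $\ell$ replaced by $2\ell$) together with $(M.2)$ to split $M_{k}$ as $M_{k-p}M_p$ up to the geometric factor $AH^k$. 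The constant $C_\ell$ in \eqref{eq:Msumgrowth} then absorbs $A$, the powers of $H$ (reabsorbed by enlarging $\ell$ slightly inside \eqref{eq:Msumfacgrowth}), and the combinatorial constant.

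The main obstacle I anticipate is pinning down the \emph{right} elementary estimate for $S(k+1, p+1)$ so that the $p$-dependence on the right-hand side of \eqref{eq:Msumgrowth} comes out as exactly $(2\ell)^p/M_p$ with no leftover $p!$ or binomial factor — one must use an identity like $\sum_{k} S(k,p) x^k = x^p/((1-x)(1-2x)\cdots(1-px))$ or the explicit formula $S(k+1,p+1) = \sum_{j=0}^{p}\binom{?}{?}\cdots$ to see that summing $S(k+1,p+1)$ against a geometric-type weight in $k$ produces cancellation of the factorial growth in $p$. Everything else is the ratio-test/geometric-tail argument above, which is routine once $(M.1)$ and $(M.3)$ are invoked.
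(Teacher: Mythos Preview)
Your argument for \eqref{eq:Msumfacgrowth} is correct and matches the paper's approach exactly: both use that $(k+1)M_k/M_{k+1}\to 0$ (Komatsu's Lemma~4.1, a consequence of $(M.3)$) to get a geometric tail with ratio $\leq 1/2$ for $p$ large, then absorb small $p$ into the constant.

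For \eqref{eq:Msumgrowth} you have the right strategy --- bound $S(k+1,p+1)$ and reduce to \eqref{eq:Msumfacgrowth} --- but you have not nailed down the estimate, and your proposed detour through $(M.2)$ to split $M_k\approx M_{k-p}M_p$ is a red herring. The paper uses the elementary bound
\[
S(k+1,p+1)\leq 2^{k+1}(p+1)^{k-p}\leq 2^{k+1}\,\frac{k!}{p!}\qquad(k\geq p),
\]
the first inequality being a known estimate (Rennie--Dobson) and the second following from $(p+1)(p+2)\cdots k\geq (p+1)^{k-p}$. Substituting this gives
\[
\sum_{k\geq p} S(k+1,p+1)\frac{\ell^k}{M_k}\;\leq\;\frac{2}{p!}\sum_{k\geq p}\frac{k!\,(2\ell)^k}{M_k}\;\leq\;\frac{2}{p!}\cdot C_{2\ell}\,\frac{p!}{M_p}(2\ell)^p,
\]
which is exactly \eqref{eq:Msumgrowth}. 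The $k!/p!$ in the Stirling bound is what kills the factorial and leaves only $(2\ell)^p/M_p$; no appeal to $(M.2)$ or reindexing is needed.
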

	
	\begin{proof} 
	Clearly, it is enough to show \eqref{eq:Msumfacgrowth} just for sufficiently large $p$. Using \cite[Lemma 4.1, p. 55]{ultradistributions1}, there is $p_0$ such that for any $p\geq p_0$ we have $p / m_{p}:=p M_{p-1}/M_{p} \leq (2 \ell)^{-1}$. Hence, it follows that for $p$ in this range
			\begin{align*}
				\sum_{k = p}^{\infty} \frac{k! \ell^{k}}{M_{k}} &= \frac{p!}{M_{p}} \left( \ell^{p} + \sum_{k = p + 1}^{\infty} \frac{(p + 1) \cdot \ldots \cdot k \cdot \ell^{k}}{m_{p + 1} \cdot \ldots \cdot m_{k}} \right)
			\leq 2 \frac{p!}{M_{p}} \ell^{p} . 
			\end{align*}				
For \eqref{eq:Msumgrowth}, 
in view of \cite[Theorem 3]{StirlingNumbersOfSecondKind}, we have
		$S(k + 1, p + 1) \leq 2^{k + 1} (p + 1)^{k - p} \leq 2^{k + 1} k!/p! $
		for $k \geq p$. The rest follows by application of \eqref{eq:Msumfacgrowth}.
	\end{proof}

In \cite{StructThQADistrInf}, the structure of distributional quasiasymptotics at infinity was found by noting that certain primitives preserve the asymptotic behavior, being of a higher degree, and using the fact that eventually the primitives are continuous functions. As the latter part does not hold in general for ultradistributions, a more careful analysis is needed, although we may carry over some of the distributional results. In fact, one may retread the proofs from \cite[Section 2]{StructThQADistrInf} (see also \cite[Section 2.10]{stevan2011asymptotic}) 
to obtain,

	\begin{lemma}
		\label{l:qaprimitiveinfinity}
		Let $f \in \uddspacereal{\beurou}$. Suppose $f$ has quasiasymptotics with respect to $\lambda^{\alpha} L(\lambda)$.
			\begin{enumerate}[(i)]
				\item If $\alpha \notin \integers_{-}$: for any $n \in \naturals$ and any $n$-primitive $F_{n}$ of $f$ there exists a polynomial $P$ of degree at most $n - 1$ such that $F_{n} + P$ has quasiasymptotics with respect to $\lambda^{\alpha + n} L(\lambda)$ in $\uddspacereal{\beurou}$. 
				\item If $\alpha = -k$, $k \in \integers_{+}$: there is some $(k - 1)$-primitive $F$ of $f$ such that $F$ has quasiasymptotics with respect to $\lambda^{-1}L(\lambda)$ in $\uddspacereal{\beurou}$.
			\end{enumerate}
	\end{lemma}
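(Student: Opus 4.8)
The plan is to establish Lemma \ref{l:qaprimitiveinfinity} by carefully adapting the distributional arguments of \cite[Section 2]{StructThQADistrInf}, paying attention to the fact that primitives of ultradistributions need not eventually be continuous functions; however, the parts of those arguments that are purely functional-analytic carry over verbatim. Throughout, recall that if $f(\lambda x)\sim\lambda^{\alpha}L(\lambda)g(x)$ then $g$ is a homogeneous distribution of degree $\alpha$, and that for $\phi\in\udspacereal{\beurou}$ the dilates $\phi_{\lambda}(x):=\lambda^{-1}\phi(x/\lambda)$ act continuously. The first step is to reduce to the case $n=1$: since an $n$-primitive is an iterated $1$-primitive and the polynomial corrections compose, it suffices to treat a single primitive and then iterate.

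For part (i), fix a $1$-primitive $F$ of $f$. Pick any $\psi_{0}\in\udspacereal{\beurou}$ with $\int_{\reals}\psi_{0}=1$. Every $\phi\in\udspacereal{\beurou}$ can be written as $\phi=\phi(1)\cdot$ wait — more precisely, setting $c_\phi=\int_\reals\phi$, the function $\phi-c_\phi\psi_0$ has a primitive $\Phi$ in $\udspacereal{\beurou}$, and one computes
\begin{equation*}
\ev{F(\lambda x)}{\phi(x)} = -\lambda\ev{f(\lambda x)}{\Phi(x)} + c_{\phi}\lambda\ev{F(\lambda x)}{\psi_{0}(x)}.
\end{equation*}
The first term, after division by $\lambda^{\alpha+1}L(\lambda)$, converges to $-\ev{g(x)}{\Phi(x)}$ by hypothesis. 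The crux is to show that $\ev{F(\lambda x)}{\psi_{0}(x)}/(\lambda^{\alpha+1}L(\lambda))$ converges; this is where I would follow \cite{StructThQADistrInf} and use a Tauberian/monotonicity-type argument on the scalar function $\lambda\mapsto\ev{F(\lambda x)}{\psi_0(x)}$, exploiting that its derivative is controlled via $f$ and that $\alpha+1\notin\integers_{-}\cup\{0\}$ excludes the resonant logarithmic case. The limit produces a constant which is exactly the value $\ev{g_1(x)}{\psi_0(x)}$ for the candidate limit ultradistribution $g_1$, and the polynomial $P$ of degree $\le 0$ (a constant, for $n=1$) is the additive freedom in choosing the primitive; iterating gives degree $\le n-1$. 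The key point needing the ultradistributional setting is only that all the test functions $\Phi,\psi_0$ and their primitives lie in $\udspacereal{\beurou}$, which holds since $\udspacereal{\beurou}$ is closed under primitivation of functions with vanishing integral — a standard fact for non-quasianalytic classes.

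For part (ii), when $\alpha=-k$ with $k\in\integers_+$, the obstruction is precisely that taking primitives may stall at the degree $-1$ because of the logarithmic resonance at integer steps; the statement is correspondingly weaker — it only asserts existence of \emph{some} $(k-1)$-primitive with quasiasymptotics of order $-1$, not that all of them work, and no polynomial freedom is claimed. The approach is to apply part (i)-style reasoning to the non-resonant steps and handle the single resonant step by a direct choice: at the step where the degree would become $0$ one instead selects the primitive so as to kill the logarithmic term, which is possible because that term is a scalar multiple of a fixed homogeneous distribution and can be absorbed into the constant of integration. Concretely, one works down from $g=$ (homogeneous of degree $-k$) to its primitives, using that the primitive of a homogeneous distribution of degree $-j$ ($j\ge 2$) is homogeneous of degree $-j+1$ up to a polynomial, until reaching degree $-1$.

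I expect the main obstacle to be the scalar convergence of $\ev{F(\lambda x)}{\psi_0(x)}/(\lambda^{\alpha+1}L(\lambda))$ in part (i): unlike in the distributional case one cannot simply say $F$ is eventually a continuous function and read off pointwise asymptotics, so one must argue entirely at the level of the pairing, using the structural information already extracted from $f$ together with properties of slowly varying functions (uniform convergence of $L(a\lambda)/L(\lambda)\to 1$ on compact $a$-intervals, Potter-type bounds). This is exactly the point where \cite{StructThQADistrInf} and \cite[Section 2.10]{stevan2011asymptotic} do the real work, and the claim is that their argument is insensitive to the ultra- vs. ordinary distinction because it only manipulates the scalar function and its derivative; I would spell out this insensitivity rather than reprove the scalar Tauberian lemma.
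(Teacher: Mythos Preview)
Your approach is exactly what the paper does: it gives no proof beyond the remark ``one may retread the proofs from \cite[Section 2]{StructThQADistrInf} (see also \cite[Section 2.10]{stevan2011asymptotic}),'' and your sketch correctly identifies why those arguments transfer --- they operate purely on the scalar function $\lambda\mapsto\ev{F(\lambda x)}{\psi_0(x)}$ and its $\lambda$-derivative via Karamata-type integration of regularly varying functions, never invoking pointwise regularity of $F$. Two minor slips: in your displayed identity the second term should read $c_{\phi}\ev{F(\lambda x)}{\psi_{0}(x)}$ without the stray factor $\lambda$ (your next sentence already uses the corrected normalization); and in part (ii) there is in fact no resonant step to ``kill'' --- taking $k-1$ primitives from degree $-k$ stops precisely at degree $-1$, before the $-1\to 0$ logarithmic obstruction ever arises, so the passage about selecting a primitive ``at the step where the degree would become $0$'' is misplaced, though your final sentence describes the actual mechanism correctly.
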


The previous lemma roughly speaking shows that in order to find the structure of quasiasymptotics for arbitrary degree, it suffices to discover the structure for degrees $\geq -1$, where extra care is needed for the case $-1$. It should also be noticed that the converse statements for $(i)$ and $(ii)$ from Lemma \ref{l:qaprimitiveinfinity} trivially hold true.

The next lemma, a direct consequence of well-known moment asymptotic expansion \cite{estrada2012distributional,Schmidt2005},  states that the quasiasymptotic behavior of degree $>-1$ is a local property at infinity, which in some arguments enables us to remove the origin from the support of the ultradistribution in our analysis.

	\begin{lemma}
		\label{l:qalocalproperty}
		Suppose that $f_{1}, f_{2}  \in \uddspacereal{\beurou}$ and that for some $a > 0$, $f_{1}$ and $f_{2}$ coincide on $\reals \setminus [-a, a]$. Suppose that $f_{1}(\lambda x)\sim \lambda^{\alpha}L(\lambda) g(x)$ in $\uddspacereal{\beurou}$ as $\lambda\to\infty$, where $\alpha > -1$. Then, also $f_{2}(\lambda x)\sim \lambda^{\alpha}L(\lambda) g(x)$ in $\uddspacereal{\beurou}$.
	\end{lemma}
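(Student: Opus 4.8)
The plan is to reduce the statement to the fact that a compactly supported ultradistribution becomes, after dilation, negligible against any $\lambda^{\alpha}L(\lambda)$ with $\alpha>-1$. Since $f_{1}$ and $f_{2}$ agree on $\reals\setminus[-a,a]$, the difference $h:=f_{1}-f_{2}$ vanishes there, so $\supp h\subseteq[-a,a]$ and hence $h\in\udespacereal{\beurou}$. By bilinearity of the duality pairing and linearity of the limit, it then suffices to prove that
\[
	h(\lambda x)=\littleoh{\lambda^{\alpha}L(\lambda)}\quad\text{in }\uddspacereal{\beurou}\text{ as }\lambda\to\infty ,
\]
because this gives, for each $\phi\in\udspacereal{\beurou}$,
\[
	\lim_{\lambda\to\infty}\ev{\frac{f_{2}(\lambda x)}{\lambda^{\alpha}L(\lambda)}}{\phi(x)}
	=\lim_{\lambda\to\infty}\ev{\frac{f_{1}(\lambda x)}{\lambda^{\alpha}L(\lambda)}}{\phi(x)}
	-\lim_{\lambda\to\infty}\ev{\frac{h(\lambda x)}{\lambda^{\alpha}L(\lambda)}}{\phi(x)}
	=\ev{g(x)}{\phi(x)} .
\]

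The second step is to establish the decay estimate $\ev{h(\lambda x)}{\phi(x)}=O(\lambda^{-1})$ as $\lambda\to\infty$, for each fixed $\phi\in\udspacereal{\beurou}$. One writes $\ev{h(\lambda x)}{\phi(x)}=\lambda^{-1}\ev{h(t)}{\phi(t/\lambda)}$ and observes that, for $\lambda\geq1$, the functions $\phi(\cdot/\lambda)$ stay in a bounded subset of $\uespacereal{\beurou}$ on a fixed compact neighbourhood of $[-a,a]$, since their derivatives are merely multiplied by $\lambda^{-j}\leq1$; the continuity of $h$ on $\uespacereal{\beurou}$ then supplies a constant $C>0$, independent of $\lambda\geq1$, with $\lvert\ev{h(t)}{\phi(t/\lambda)}\rvert\leq C$, whence the claim. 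Alternatively, one may simply invoke the moment asymptotic expansion for compactly supported ultradistributions \cite{estrada2012distributional,Schmidt2005}, which even yields the full asymptotic series $h(\lambda x)\sim\sum_{n=0}^{\infty}(-1)^{n}\mu_{n}\delta^{(n)}(x)/(n!\,\lambda^{n+1})$ with $\mu_{n}=\ev{h(t)}{t^{n}}$.

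Finally, since $\alpha+1>0$ and $L$ is slowly varying at infinity, the standard growth property of slowly varying functions gives $\lambda^{\alpha+1}L(\lambda)\to\infty$ as $\lambda\to\infty$; therefore
\[
	\left\lvert\ev{\frac{h(\lambda x)}{\lambda^{\alpha}L(\lambda)}}{\phi(x)}\right\rvert
	=O\!\left(\frac{1}{\lambda^{\alpha+1}L(\lambda)}\right)\longrightarrow 0 ,
\]
which completes the argument. There is no genuine obstacle here; the only point deserving some care is the uniformity in $\lambda$ of the bound in the second step, especially in the Roumieu case where the continuity estimate for $h$ involves an arbitrary member of the defining inductive spectrum of $\uespacereal{\beurou}$, but this is immediate because contracting by a factor $\geq1$ does not enlarge any of the relevant seminorms.
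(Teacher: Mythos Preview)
Your argument is correct and follows exactly the route the paper indicates: the paper does not spell out a proof but simply states that the lemma is ``a direct consequence of [the] well-known moment asymptotic expansion \cite{estrada2012distributional,Schmidt2005}'', which is precisely the reduction to $h=f_{1}-f_{2}\in\udespacereal{\beurou}$ and the $O(\lambda^{-1})$ decay you derive. Your additional direct boundedness argument (avoiding the full moment expansion) is a harmless elaboration of the same idea.
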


\subsection{Structural theorem for $\alpha \notin \integers_{-}$}

We study in this subsection quasiasymptotics of degree $\alpha\notin\mathbb{Z}_{-} $. Part of our analysis reduces the general case to that when $\alpha>-1$, i.e., the case when the quasiasymptotic behavior is local. Consequently, we may restrict our discussion to those ultradistributions whose support lie in the complement of some zero neighborhood. As both the negative and positive half-line can be treated symmetrically, it is natural to start the analysis with ultradistributions that are supported on the positive half-line. In the next crucial lemma we further normalize the situation by assuming that our ultradistribution is supported in $(e,\infty)$.

\begin{lemma}
\label{l:quasiStrucSup} Let $\alpha\in\mathbb{R}$ and let $f \in \uddspacereal{\beurou}$ be such that $\operatorname*{supp} f\subset (e,\infty)$ and $f$ has quasiasymptotic behavior at infinity with respect to $\lambda^{\alpha}L(\lambda)$ in $\uddspacereal{\beurou}(0,\infty)$. Then, there are continuous functions $f_{m}$  such that $\operatorname*{supp} f_{m}\subset (e,\infty)$,
		$
				f = \sum_{m = 0}^{\infty} f_{m}^{(m)} , 
		$
		the limits
			\begin{equation*}
				\lim_{x \rightarrow \infty} \frac{f_{m}(x)}{x^{\alpha + m} L(x)}
			\end{equation*}
		exist, and furthermore, for some $\ell > 0$ (any $\ell > 0$) there is a $C=C_{\ell} > 0$ such that,
			\begin{equation*}		
				\left| f_{m}(x) \right| \leq  C \frac{\ell^{m}}{M_{m}} x^{\alpha + m} L(x), \qquad m\in\mathbb{N},\ x>0. 			\end{equation*}

\end{lemma}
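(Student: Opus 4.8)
The plan is to connect the quasiasymptotic behavior of $f$ at infinity with the $S$-asymptotic behavior of a transformed ultradistribution via the exponential change of variables $x\mapsto e^x$, exactly as announced in the introduction. First I would set $u = f\circ \exp$ after suitably cutting off, so that $\operatorname{supp} u \subset (1,\infty)$; the condition $\operatorname{supp} f \subset (e,\infty)$ is precisely what makes this transformation well-behaved and keeps the relevant pullback inside the test function spaces (the map $\phi \mapsto \phi(e^{\,\cdot})e^{\,\cdot}$ sends $\udspacereal{\beurou}(0,\infty)$-type functions supported away from $0$ into $\uddspacereal{\beurou}$-duals after the change of variables, using $(M.2)$ to control the derivatives of $e^x$). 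Under this substitution, $f(\lambda x)\sim \lambda^\alpha L(\lambda)g(x)$ translates into an $S$-asymptotic relation $u(x+h)\sim e^{\alpha h}\widetilde L(e^h)\, C e^{\alpha x}$ as $h\to\infty$ in the appropriate ultradistribution space, where $\widetilde L(e^h)=L(e^h)$ is slowly varying. The key input is then the known structural theorem for $S$-asymptotics of ultradistributions (available in the literature, e.g. \cite{stevan2011asymptotic}), which yields a representation $u = \sum_{m} u_m^{(m)}$ with continuous $u_m$ obeying pointwise asymptotics $u_m(x)\sim c_m e^{\alpha x}\widetilde L(e^x)$ and coefficient bounds $|u_m(x)| \le C (\ell^m/M_m) e^{(\alpha+m')x}L(e^x)$ for a suitable exponent.

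Next I would transfer the representation back through $x\mapsto \log x$. The chain rule for the $m$-th derivative of a composition with $\log$ produces Stirling numbers of the second kind: $(u_m\circ\log)^{(j)}$ expands as a combination of $x^{-j}(\log$-derivatives of $u_m)$ with coefficients built from $S(j,i)$. Re-summing the double series — collecting all contributions landing on a given order of derivative — is where the estimate \eqref{eq:Msumgrowth} of Lemma \ref{l:Msumfacgrowth} enters: it is exactly tailored to absorb the Stirling-number weights $S(k+1,p+1)$ against $\ell^k/M_k$ and return a bound of the form $(2\ell)^p/M_p$, which is what we need for the new coefficients $f_m$. One must be slightly careful that the rearrangement of the infinite sum of derivatives is legitimate; this is justified by the absolute convergence granted by \eqref{eq:Msumfacgrowth}/\eqref{eq:Msumgrowth} together with the local finiteness coming from the support condition $\operatorname{supp} f_m \subset (e,\infty)$ (so on any compact set only finitely many distributional issues arise, and the series converges in $\uddspacereal{\beurou}$). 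The pointwise limits $\lim_{x\to\infty} f_m(x)/(x^{\alpha+m}L(x))$ then follow from the corresponding limits for $u_m$ after tracking how the factors $x^{-j}$ and the polynomial-in-$\log$ corrections behave — these corrections are lower order and can be pushed into neighboring terms $f_{m\pm 1}$ without destroying the bound, again using $(M.2)$.

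The Roumieu case requires the usual quantifier bookkeeping: "for some $\ell>0$" in the Beurling case versus "for any $\ell>0$" in the Roumieu case. This is handled by noting that Lemma \ref{l:Msumfacgrowth} provides $C_\ell$ for every $\ell$, so once the $S$-asymptotic structural theorem is invoked in the correct ($\ast$)-class the constants propagate in the right direction; no new idea is needed beyond being consistent about which $\ell$ is fixed when.

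I expect the main obstacle to be the careful reindexing of the double series after pulling back through $\log$: one has to verify that grouping the terms by the final differentiation order produces coefficients $f_m$ that are (a) continuous, (b) supported in $(e,\infty)$, (c) satisfy the sharp $\ell^m/M_m$ bound via \eqref{eq:Msumgrowth}, and (d) retain the pointwise asymptotic limit — all simultaneously, while justifying convergence of the rearranged series in $\uddspacereal{\beurou}$. A secondary technical point is checking that the exponential substitution genuinely maps the test-function spaces as claimed (using $(M.1)$–$(M.3)$ to estimate derivatives of $e^x$ and $\log x$), so that the $S$-asymptotic reformulation is valid in the first place; this is routine but must be done with the support restrictions in mind.
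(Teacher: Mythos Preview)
Your plan is essentially the paper's own: the exponential change of variables, the appeal to the $S$-asymptotic structural theorem, and the use of \eqref{eq:Msumgrowth} to absorb Stirling-number weights when transferring back are exactly the ingredients used there. Two points of precision are worth flagging. First, the $S$-asymptotic structural theorem as invoked in the paper (from \cite{stevan2011asymptotic}) produces $u = P(D)u_1 + u_2$ with a \emph{single} continuous $u_1$ and an ultradifferential operator $P(D)=\sum a_n D^n$, rather than an a priori decomposition $\sum u_m^{(m)}$ with independent $u_m$; this matters because the re-summing then acts only on the scalar coefficients $a_n$, which is what makes the Stirling estimate \eqref{eq:Msumgrowth} apply cleanly to the constants $c_m = (-1)^m\sum_{n\ge m}(-1)^n a_n S(n+1,m+1)$. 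Second, since $u_1$ is only continuous you cannot differentiate $u_1\circ\log$ classically; the computation must be done on the test-function side, differentiating $\psi(x)=e^x\phi(e^x)$ via Fa\`a di Bruno (this is where the Stirling numbers of the second kind actually appear), and then changing variables $y=e^x$ in the resulting integrals --- the functions $f_{1,m}(y)=u_1(\log y)\,y^m$ drop out directly with no need to ``push lower-order corrections into neighboring terms.''
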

\begin{proof}
Suppose $f(\lambda x) \sim \lambda^{\alpha} L(\lambda) g(x)$ in $\uddspacereal{\beurou}(0, \infty)$ as $\lambda \rightarrow \infty$. Since composition with a real analytic function induces continuous mappings between spaces of ultradifferentiable functions (see e.g. \cite[Prop. 8.4.1, p. 281]{hormander1990analysis}), we obtain that the composition $f(e^{x})$ is an element of $\uddspacereal{\beurou}$. Also, $\psi \in \udspacereal{\beurou}$ if and only if  $\psi(x) = \varphi(e^{x})$ with $\varphi\in \udspacereal{\beurou}(0,\infty)$. 

These key observations allow us to make a change of variables in order to apply the structural theorem for $S$-asymptotics \cite[Theorem~1.10, p.~46]{stevan2011asymptotic}. 
In fact, we set $u(x) := f(e^{x})$, $w(x) := g(e^{x})$ and $c(h) := e^{\alpha h} L(e^{h})$. (Notice that $w$ has actually the form $w(x)=Be^{\alpha x}$ for some $B>0$.) A quick computation shows that that $u$ has the $S$-asymptotic behavior
$u(x + h) \sim c(h) w(x)$  in $\uddspacereal{\beurou} $ as $h \rightarrow \infty.$ The  quoted structure theorem yields the existence of an ultradifferential operator $P(D)$ of class $\beurou$ and continuous functions $u_1$ and $u_2$ with supports on $(1,\infty)$ such that $u = P(D) u_{1} + u_{2} $ 
on $(0, \infty)$ and $\lim_{h \rightarrow \infty}  u_{i}(x + h)/c(h)$ exist uniformly for $x$ on compacts of $(0,\infty)$.

Take any $\varphi \in \udspacereal{\beurou}(0, \infty)$ and put $\psi(x) = e^{x} \varphi(e^{x})$, then the substitution $y = e^{x}$ yields
	\begin{align*}
		\ev{f(y)}{\varphi(y)} &= \ev{f(y)}{\frac{\varphi(y)}{y} y} = \ev{u(x)}{\psi(x)} 
				= \ev{P(D) u_{1} (x)}{\psi(x)} + \ev{u_{2} (x)}{\psi(x)} . 
	\end{align*}
	
Let us consider both terms of the sum individually. The latter is simply	\[ \ev{u_{2}(x)}{\psi(x)} = \int_{1}^{\infty} u_{2}(x) \psi(x) dx  = \int_{e}^{\infty} u_{2}(\log y) \varphi(y) dy . \]
Setting $f_{2}(y) := u_{2}(\log y)$, we get
$		\ev{u_{2}(x)}{\psi(x)} = \ev{f_2(y)}{\varphi(y)} ,$
and the existence of 		
$\lim_{y \rightarrow \infty} y^{-\alpha}f_2(y)/ L(y) .
$ For the first term, we will need to explicitly calculate the derivatives of $\psi$. Using the Fa\`{a} di Bruno formula \cite[Eq. (2.2)]{Johnson02thecurious}, 

	\begin{align*}
		\psi^{(n)}(x) &= e^{x} \sum_{k = 0}^{n} {n \choose k} \frac{d^{k}}{dx^{k}} \left( \varphi(e^{x}) \right) 
				   = e^{x} \sum_{m = 0}^{n} S(n + 1, m + 1) e^{mx} \varphi^{(m)}(e^{x}) , 
	\end{align*}
where we have applied \cite[Theorem 5.3.B]{AdvancedCombinatorics}. 
Then,
	\begin{align*}
		\int_{1}^{\infty} u_{1}(x) \psi^{(n)}(x) dx
											 = \sum_{m = 0}^{n} S(n + 1, m + 1) \int_{e}^{\infty} u_{1}(\log y) \varphi^{(m)}(y) y^{m} dy . 
	\end{align*}
If $P(D) = \sum_{n = 0} a_{n} D^{n}$, then by \eqref{eq:Msumgrowth} from Lemma \ref{l:Msumfacgrowth}, we may consider the following constants,
	\begin{equation*}
		c_{m} = (-1)^{m} \sum_{n = m}^{\infty} (-1)^{n} a_{n} S(n + 1, m + 1) ,
	\end{equation*}
and it follows that $c_{m} \leq C \mu^{m} / M_{m}$ for some $\mu > 0$ (for any $\mu > 0$) and some $C_{\mu} = C > 0$. Collecting everything together, we obtain
	\begin{equation*}
		\ev{P(D) u_{1}(x)}{\psi(x)} = \sum_{m = 0}^{\infty} (-1)^{m} c_{m} \int_{e}^{\infty} u_{1}(\log y) \varphi^{(m)}(y) y^{m} dy . 
	\end{equation*}
So if we define $f_{1, m}(y) := u_{1}(\log y) y^{m}$, $m \in \naturals$, we get
	\begin{equation*}
		\ev{P(D) u_{1}(x)}{\psi(x)} = \sum_{m = 0}^{\infty} c_{m} \ev{f_{1, m}^{(m)}(y)}{\varphi(y)} ,
	\end{equation*}
and the limits
$		\lim_{y \rightarrow \infty} y^{-\alpha-m}f_{1, m}(y)/L(y)$ 
exist. This completes the proof of the lemma.

\end{proof}

We are ready to discuss the general case.

	\begin{theorem}
		\label{t:structalpha>-1}
		Suppose $\alpha \notin \integers_{-}$ and let $k \in \naturals$ be the smallest non-negative integer such that $-(k + 1) < \alpha$.  Then, an ultradistribution $f \in \uddspacereal{\beurou}$ has quasiasymptotic behavior 
\begin{equation}\label{eqquaalpha>-1}
f(\lambda x)\sim \lambda^{\alpha}L(\lambda)(c_{-}x_{-}^{\alpha}+c
_{+}x_{+}^{\alpha}) \quad \mbox{ in }\uddspacereal{\beurou} \mbox{ as }\lambda\to\infty
\end{equation}
 if and only if there exist continuous functions $f_{m}$ on $\reals$, $m \geq k$, such that
			\begin{equation}
				\label{eq:fstructalpha>-1}
				f = \sum_{m = k}^{\infty} f_{m}^{(m)} , 
			\end{equation}
the limits			
			\begin{equation}
				\label{eq:fstructalpha>-1asymp}
				\lim_{x \rightarrow \pm \infty} \frac{f_{m}(x)}{x^{m}|x|^{\alpha } L(|x|)}=c^{\pm}_{m}, \qquad m \geq k, 
			\end{equation}
		exist, and for some $\ell > 0$ (any $\ell > 0$) there is a $C=C_{\ell} > 0$ such that
			\begin{equation}
				\label{eq:fstructalpha>-1bound}
				\left| f_{m}(x) \right| \leq   C \frac{\ell^{m}}{M_{m}} (1+|x|)^{\alpha + m} L(|x|) , \qquad x\in\mathbb{R},
			\end{equation}
		for all $m \geq k$. Furthermore, in this case we have
                \begin{equation}
                \label{eq:strucalpha>-1constants}
                c_{\pm}= \sum_{m=k}^{\infty} c^{\pm}_{m} \frac{\Gamma(\alpha+m+1)}{\Gamma(\alpha+1)}.
                \end{equation}		
	\end{theorem}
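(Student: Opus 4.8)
The plan is to reduce the general case to the normalized situation already handled in Lemma \ref{l:quasiStrucSup}. The key structural observation is that (\ref{eqquaalpha>-1}) splits into behavior coming from the positive and negative half-lines, and that the quasiasymptotic behavior of degree $\alpha > -1$ is a local property at infinity by Lemma \ref{l:qalocalproperty}.

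For the direct implication, I would first treat the case $\alpha > -1$, i.e.\ $k = 0$. By Lemma \ref{l:qalocalproperty} we may modify $f$ near the origin without affecting the quasiasymptotics; using a cutoff from $\udspacereal{\beurou}$ we write $f = f_{-} + f_{0} + f_{+}$ where $\supp f_{0}$ is compact, $\supp f_{+}\subset(e,\infty)$, and $\supp f_{-}\subset(-\infty,-e)$. Testing against $\phi$ supported in $(0,\infty)$, only $f_{+}$ contributes to the quasiasymptotics there, so $f_{+}$ has quasiasymptotic behavior with respect to $\lambda^{\alpha}L(\lambda)$ in $\uddspacereal{\beurou}(0,\infty)$, and Lemma \ref{l:quasiStrucSup} produces continuous functions $f_{+,m}$ on $(e,\infty)$ with the stated asymptotics and bounds. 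Symmetrically (after the reflection $x\mapsto -x$) one handles $f_{-}$. The compactly supported piece $f_{0}$ lies in $\uddspacereal{\beurou}$ with compact support, hence by the standard structure theorem for compactly supported ultradistributions it is a locally finite (in fact, after regrouping, an infinite) sum $\sum f_{0,m}^{(m)}$ of derivatives of continuous compactly supported functions satisfying bounds $|f_{0,m}(x)|\le C\ell^{m}/M_{m}$, which are trivially dominated by $C\ell^{m}/M_{m}(1+|x|)^{\alpha+m}L(|x|)$ since $\alpha+m\ge0$ eventually and $L$ is locally bounded and bounded away from $0$ on compacts; the corresponding limits (\ref{eq:fstructalpha>-1asymp}) for these pieces are $0$. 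Adding the three families termwise (and absorbing the finitely many terms where the supports overlap into continuous functions on all of $\reals$) gives (\ref{eq:fstructalpha>-1})--(\ref{eq:fstructalpha>-1bound}) with $k=0$. For general $\alpha\notin\integers_{-}$ with $\alpha\le -1$, I invoke Lemma \ref{l:qaprimitiveinfinity}(i): choosing a $k$-primitive $F_{k}$ of $f$, there is a polynomial $P$ of degree $<k$ such that $F_{k}+P$ has quasiasymptotics of degree $\alpha+k>-1$, so the already-proven case applies to $F_{k}+P$, yielding $F_{k}+P = \sum_{m\ge0}h_{m}^{(m)}$ with the appropriate asymptotics for degree $\alpha+k$. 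Differentiating $k$ times, $f = \sum_{m\ge0}h_{m}^{(m+k)} = \sum_{m\ge k}f_{m}^{(m)}$ with $f_{m}:=h_{m-k}$ (the polynomial $P$ disappears after $k$ derivatives), and reindexing the asymptotic relations replaces degree $\alpha+k+j$ by $\alpha+m$ exactly as in (\ref{eq:fstructalpha>-1asymp}); the bounds transform correspondingly.

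The converse is the routine direction: given the representation (\ref{eq:fstructalpha>-1})--(\ref{eq:fstructalpha>-1bound}), I compute $\langle f(\lambda x)/(\lambda^{\alpha}L(\lambda)),\phi(x)\rangle = \sum_{m\ge k}(-1)^{m}\lambda^{-\alpha-m}L(\lambda)^{-1}\int_{\reals}f_{m}(\lambda x)\phi^{(m)}(x)\,dx$ for $\phi\in\udspacereal{\beurou}$. The bound (\ref{eq:fstructalpha>-1bound}) together with the defining estimates for $\udspacereal{\beurou}$ and (\ref{eq:Msumfacgrowth}) shows the series converges absolutely and uniformly in $\lambda\ge1$, and the standard uniform convergence theory of slowly varying functions (e.g.\ the uniform convergence theorem, as in \cite{bingham1989regular}) shows $f_{m}(\lambda x)/(\lambda^{\alpha+m}L(\lambda))\to c_{m}^{\pm}|x|^{\alpha+m}$ in an $L^{1}_{loc}$-dominated fashion against $\phi^{(m)}$. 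Passing to the limit termwise gives $\langle f(\lambda x)/(\lambda^{\alpha}L(\lambda)),\phi\rangle\to \sum_{m\ge k}(-1)^{m}\int_{\reals}(c_{m}^{-}x_{-}^{\alpha+m}+c_{m}^{+}x_{+}^{\alpha+m})\phi^{(m)}(x)\,dx$; integrating by parts $m$ times and using the identity $\frac{d^{m}}{dx^{m}}x_{\pm}^{\alpha+m} = \frac{\Gamma(\alpha+m+1)}{\Gamma(\alpha+1)}x_{\pm}^{\alpha}$ (valid as distributions when $\alpha+m\notin\integers_{-}$, which holds since $\alpha\notin\integers_{-}$) identifies the limit as $\langle c_{-}x_{-}^{\alpha}+c_{+}x_{+}^{\alpha},\phi\rangle$ with $c_{\pm}$ given by (\ref{eq:strucalpha>-1constants}). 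This simultaneously establishes (\ref{eq:quasiasympinfinity}) and the formula (\ref{eq:strucalpha>-1constants}).

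I expect the main obstacle to be the bookkeeping in the direct implication: combining the three separate expansions (from $f_{+}$, $f_{-}$, and the compactly supported remainder $f_{0}$) into a single series $\sum_{m\ge k}f_{m}^{(m)}$ with continuous functions defined on all of $\reals$ satisfying the two-sided limit condition (\ref{eq:fstructalpha>-1asymp}) and the global bound (\ref{eq:fstructalpha>-1bound}). One must glue $f_{+,m}$ and $f_{-,m}$ (whose supports are disjoint) and then add $f_{0,m}$, taking care that the finitely many low-order terms where things are merely continuous rather than matching the homogeneous model are absorbed without destroying the $\ell^{m}/M_{m}$ growth control — this uses that the bound constant $C_{\ell}$ may be enlarged and that for small $m$ the functions involved are simply continuous with compact-plus-half-line support, hence dominated by $(1+|x|)^{\alpha+m}L(|x|)$ after possibly enlarging $C$. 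A secondary technical point is justifying the termwise limit in the converse via a dominated-convergence argument that is uniform in $\lambda$; here Potter-type bounds for $L$ (that $L(\lambda x)/L(\lambda)$ is bounded uniformly for $\lambda\ge1$ and $x$ in a compact subset of $(0,\infty)$, with at most polynomial growth) together with (\ref{eq:Msumfacgrowth}) supply the needed integrable majorant.
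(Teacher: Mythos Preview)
Your proposal is correct and follows essentially the same approach as the paper: reduce to $\alpha>-1$ via Lemma~\ref{l:qaprimitiveinfinity}(i), split $f=f_{-}+f_{c}+f_{+}$, apply Lemma~\ref{l:quasiStrucSup} to the half-line pieces and Komatsu's first structural theorem to the compactly supported piece, then establish the converse (and the formula~\eqref{eq:strucalpha>-1constants}) by a Potter-bound/dominated-convergence argument. The only cosmetic difference is that the paper states the reduction to $k=0$ up front rather than carrying out the differentiation-back step explicitly, and in the converse the summability in $m$ comes directly from the geometric bound $(h\ell R)^{m}$ (the $M_{m}$ cancel), so Lemma~\ref{l:Msumfacgrowth} is not actually needed there.
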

	
	\begin{proof} In view of Lemma \ref{l:qaprimitiveinfinity}(i), we may assume that $\alpha>-1$ so that $k=0$. 
	
	Suppose then first that  $f $ has quasiasymptotic behavior \eqref{eqquaalpha>-1}. We write $f=f_{-}+f_{c}+f_{+}$, where $f_{c} \in \udespacereal{\beurou}$ coincides with $f$ on an open interval containing $[-e, e]$ and $\operatorname*{supp}f_{-}\subset (-\infty,-e)$ and $\operatorname*{supp}f_{+}\subset (e,\infty)$. Then, by Lemma \ref{l:qalocalproperty} each $f_{\pm}$ has quasiasymptotic behavior with respect to $\lambda^{\alpha} L(\lambda)$ in $\uddspacereal{\beurou}(-\infty, 0)$ and $\uddspacereal{\beurou}(0,\infty)$, respectively. Using Lemma \ref{l:quasiStrucSup}, we find continuous functions $f_{1, m}^{\pm}$, $m \in \naturals$ with supports in $(-\infty,-e)$ and $(e, \infty)$, respectively, such that  the identities
			$ f_{\pm} = \sum_{m = 0}^{\infty} ( f_{1, m}^{\pm} )^{(m)}$
	hold, the limits
$$
c^{\pm}_{m}=(-1)^{m}\lim_{x\to\infty} \frac{f_{1,m}^{\pm}(\pm x)}{x^{\alpha+m}L(x)}
$$	
exist, and the bounds $|f_{1,m}^{\pm}(x)|\leq C' \ell^{m} |x|^{\alpha + m} L(|x|)/ M_{m} $ are satisfied for some $\ell > 0$ (any $\ell > 0$) and some $C'=C'_{\ell} > 0$. Applying Komatsu's first structural theorem\footnote{The first structural theorem even holds true in the quasianalytic case under mild conditions, see \cite[Proposition 4.1 and Proposition 4.7]{debrouwere-vindasCousin}.} for ultradistributions  \cite[Theorem 8.1 and Theorem 8.7]{ultradistributions1} one can also find continuous functions $g_m$, whose supports lie in some (arbitrarily chosen) neighborhood of $\supp f_{c}$, such that $f_{c} = \sum_{m=0}^{\infty}g^{(m)}_{m}$ in $\uddspacereal{\beurou}$ and $\sup_{x\in\mathbb{R}}|g_{m}(x)|\leq C'' \ell^{m}/M_m$ for some $\ell>0$ (for every $\ell>0$) and $C''=C''_{\ell}>0$. The functions
			$ f_{m} = g_{m} + f_{1, m}^{-} +  f_{1, m}^{+} $
satisfy all sought requirements.	We verify the relation \eqref{eq:strucalpha>-1constants} below.
		
Conversely, assume that $f$ satisfies all of the conditions above. Take any $\phi \in \udspacereal{\beurou}$ and suppose that for some $R > 1$ we have $\supp \phi \subseteq [-R, R]$. Pick $\gamma > 0$ such that $\alpha - \gamma> -1$.
Using Potter's estimate  \cite[Theorem 1.5.4]{bingham1989regular}, we may assume that

	\begin{equation}
		\label{eq:slowlyvaryingsubpolynomialbound}
		\frac{L(\lambda x)}{L(\lambda)} \leq C_{\gamma} \max\{ x^{- \gamma}, x^{\gamma} \} 
	\end{equation}	
holds for all $x,\lambda>0$. Since $\phi \in \udspacereal{\beurou}$, for any $h > 0$ (for some $h > 0$) there exists a $C_{\phi,h}$ such that for all $m \in \naturals$ and $x \in \reals$ we have $|\phi^{(m)}(x)| \leq C_{\phi,h} h^{m} M_{m}$. Due to (\ref{eq:fstructalpha>-1bound}), we now have for any $m \in \naturals$ and $\lambda>1$
			\begin{align*}
				&\left| \frac{1}{\lambda} \int_{- \infty}^{\infty} \frac{f_{m}(x)}{\lambda^{\alpha} L(\lambda)} \frac{\phi^{(m)}(x / \lambda)}{\lambda^{m}} dx \right|
					 \\
					 &
					\leq C_{\ell} \frac{(2\ell)^{m}}{M_{m}} \left( \int_{-\lambda^{-1}}^{\lambda^{-1}} \frac{|\phi^{(m)}(x)|}{\lambda^{m + \alpha} L(\lambda)} dx + \int_{|x|\geq 1/ \lambda} \frac{L(\lambda |x|)}{L(\lambda)} |x|^{\alpha + m} |\phi^{(m)}(x)| dx \right) 
					\\
					&
					\leq 2 C_{\ell} C_{\phi,h} (2h \ell)^{m} \left( \frac{1}{\lambda^{m + 1 + \alpha} L(\lambda)} + C_{\gamma} R^{m + \alpha + \gamma + 1} + \frac{C_{\gamma}}{\alpha - \gamma + m + 1} \right) 
					\leq C (2h\ell R)^{m},  	
			\end{align*}
		and, as $2h \ell$ may be chosen freely, this is absolutely summable over $m \in \naturals$.
It follows by applying the Lebesgue dominated convergence theorem twice that
			\begin{align*}
				\lim_{\lambda \rightarrow \infty } \ev{\frac{f(\lambda x)}{\lambda^{\alpha} L(\lambda)}}{\phi(x)} 
					&= \lim_{\lambda \rightarrow \infty} \frac{1}{\lambda} \sum_{m = 0}^{\infty} (-1)^{m} \int_{- \infty}^{\infty} \frac{f_{m}(x)}{\lambda^{\alpha} L(\lambda)} \frac{\phi^{(m)}(x / \lambda)}{\lambda^{m}} dx 			
\\
					&= c_{-} \int_{- \infty}^{0} |x|^{\alpha} \phi(x) dx + c_{+} \int_{0}^{\infty} x^{\alpha} \phi(x) dx ,
			\end{align*}
with $c_{-}$ and $c_{+}$ given by \eqref{eq:strucalpha>-1constants}.
		
			\end{proof}
	
\subsection{Structural Theorem for negative integral degrees}
\label{subsection Structural theorem for negative integral degree}
We now address the case of quasiasymptotics of degree $\alpha\in\mathbb{Z}_{-}$. The next structural theorem is the second main result of this section.

\begin{theorem}
		\label{t:fstructnid}
	Let $k \in \integers_{+}$ and $f \in \uddspacereal{\beurou}$. Then, $f$ has the quasiasymptotic behavior 
\begin{equation}
\label{eq:strucquasi-k} f(\lambda x)\sim \frac{L(\lambda)}{\lambda^{k}}( \gamma \delta^{(k-1)}(x) + \beta x^{-k}) \quad \mbox{ in } \uddspacereal{\beurou} \mbox{ as }\lambda\to\infty 
\end{equation}	
if and only if there exist continuous functions $f_{m}$ on $\reals$, $m \geq k - 1$, such that
			\begin{equation}
				\label{eqext:fstructnid}
				f = \sum_{m = k - 1}^{\infty} f_{m}^{(m)} , 
			\end{equation}
the limits	
			\begin{equation}
				\label{eq:fstructnidasymp}
				\lim_{x \rightarrow \pm \infty} \frac{f_{m}(x)}{x^{ m-k} L(|x|)}=c_{m}^{\pm}, \qquad m \geq k - 1, 
			\end{equation}
and			
		\begin{equation}
				\label{eq:fstructnidasymp2}
				\lim_{x \rightarrow\infty} \frac{1}{ L(x)}\int_{-x}^{x}f_{k-1}(t)dt=c_{k-1}^{\ast}
			\end{equation}	
		exist, and for some $\ell > 0$ (any $\ell > 0$) there is $C=C_{\ell} > 0$ such that
			\begin{equation}
				\label{eq:fstructnidbound}
				\left| f_{m}(x) \right| \leq  C \frac{\ell^{m}}{M_{m}}(1+ |x|)^{m-k} L(|x|), \qquad x\in\mathbb{R},
			\end{equation}
		for all $m \geq k $. Furthermore, we must have
		\begin{equation}
\label{eq:constantsk}
\gamma= c_{k-1}^{\ast}+\sum_{m=k}^{\infty}(c^{+}_{m}-c^{-}_{m}) \quad \mbox{and} \quad \beta= (-1)^{k-1}(k-1)!c^{+}_{k-1}=(-1)^{k-1}(k-1)!c^{-}_{k-1}.
\end{equation}		
		
	\end{theorem}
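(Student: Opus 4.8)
The plan is to reduce the case of negative integral degree $\alpha=-k$ to the already-settled case $\alpha=-1$ via Lemma \ref{l:qaprimitiveinfinity}(ii), following the same two-directional scheme as in Theorem \ref{t:structalpha>-1}, but paying attention to the fact that $x^{-k}$ and $\delta^{(k-1)}$ no longer transform cleanly under dilation and that the local-at-infinity property (Lemma \ref{l:qalocalproperty}) is unavailable for $\alpha\le -1$.

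First I would handle the \emph{direct implication}. Assume \eqref{eq:strucquasi-k}. By Lemma \ref{l:qaprimitiveinfinity}(ii) there is a $(k-1)$-primitive $F$ of $f$ with quasiasymptotics with respect to $\lambda^{-1}L(\lambda)$; the homogeneous ultradistribution of degree $-1$ attached to it must be of the form $a\,x_+^{-1}+b\,x_-^{-1}$ (using the classification of homogeneous ultradistributions = homogeneous distributions quoted in the preliminaries). Here I would need the structural theorem for $\alpha=-1$ — which is the $\alpha\notin\integers_-$ borderline instance handled by Theorem \ref{t:structalpha>-1} together with the extra condition \eqref{eq:fstructnidasymp2} that keeps track of the $\delta^{(k-1)}$-type term; so strictly speaking the $\alpha=-1$ structural result with the "$\ast$"-constant should be established first, either as a separate lemma or extracted from the proof of Theorem \ref{t:structalpha>-1}. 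Granting that, write $F=\sum_{m\ge 0} h_m^{(m)}$ with $h_m$ continuous, satisfying the asymptotics $h_m(x)/(x^m|x|^{-1}L(|x|))\to d_m^\pm$, the analog of \eqref{eq:fstructnidasymp2} for $h_0$, and the bounds $|h_m(x)|\le C\ell^m(1+|x|)^{m-1}L(|x|)/M_m$. Differentiating $k-1$ times, $f=F^{(k-1)}=\sum_{m\ge 0}h_m^{(m+k-1)}$, and reindexing $f_{m}:=h_{m-k+1}$ for $m\ge k-1$ gives \eqref{eqext:fstructnid} and, after shifting the index in the asymptotic relations, \eqref{eq:fstructnidasymp}, \eqref{eq:fstructnidasymp2}, \eqref{eq:fstructnidbound}; the constants $c_m^\pm$ are the $d_{m-k+1}^\pm$ and $c_{k-1}^\ast$ is the "$\ast$"-constant of $h_0$.

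Next the \emph{converse}. Given $f_m$ as in \eqref{eqext:fstructnid}–\eqref{eq:fstructnidbound}, I would test against $\phi\in\udspacereal{\beurou}$ exactly as in the proof of Theorem \ref{t:structalpha>-1}: plug $f=\sum_m f_m^{(m)}$ into $\langle f(\lambda x)/(\lambda^{-k}L(\lambda)),\phi(x)\rangle$, rescale, and use Potter's bound \eqref{eq:slowlyvaryingsubpolynomialbound} plus the ultradifferentiability estimate $|\phi^{(m)}(x)|\le C_{\phi,h}h^mM_m$ to dominate the sum by an absolutely convergent series $\sum_m C(2h\ell R)^m$, which permits interchanging $\lim_{\lambda\to\infty}$ with the sum and with the integrals by dominated convergence. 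The subtlety, and the place where \eqref{eq:fstructnidasymp2} enters, is the term $m=k-1$: there $f_{k-1}/L$ is not necessarily integrable-in-the-tail against $x^{-k}\phi^{(k-1)}$, and the contribution splits into a genuinely homogeneous part $\beta x^{-k}$ coming from the two-sided tails $c_{k-1}^\pm$ and a $\delta^{(k-1)}$-part whose coefficient is governed by $\int_{-x}^x f_{k-1}$, i.e. by $c_{k-1}^\ast$, together with the sum of the discrepancies $c_m^+-c_m^-$ from all $m\ge k$ (these arise because $\int x^{m-k}\phi^{(m)}$ for $m>k-1$ contributes, after integration by parts, to the $\delta^{(k-1)}$-coefficient). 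Carefully bookkeeping these contributions yields \eqref{eq:strucquasi-k} with $\gamma$ and $\beta$ as in \eqref{eq:constantsk}; the identity $c_{k-1}^+=c_{k-1}^-$ is forced because $x_+^{-k}$ and $x_-^{-k}$ are \emph{not} homogeneous individually — only the combination that appears as the finite part is — so consistency of the limit forces the one-sided coefficients of the top term to agree.

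The main obstacle I anticipate is precisely this delicate separation, at the critical index $m=k-1$, of the "homogeneous" ($x^{-k}$) and "delta-type" ($\delta^{(k-1)}$) contributions, and the proof that the infinite sum $\sum_{m\ge k}(c_m^+-c_m^-)$ converges and feeds into $\gamma$ — this requires using the bound \eqref{eq:fstructnidbound} to control $c_m^\pm$ geometrically in $m$ so that the series is absolutely convergent, and a careful integration-by-parts argument to see exactly which part of each higher-$m$ term survives in the limit. A secondary technical point is establishing the $\alpha=-1$ structural statement in the refined form carrying the constant $c_{k-1}^\ast$, which may not be literally contained in Theorem \ref{t:structalpha>-1} as stated and so should be isolated first.
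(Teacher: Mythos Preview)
Your reduction to $k=1$ via Lemma \ref{l:qaprimitiveinfinity}(ii) matches the paper, but the core of the argument is then precisely the $\alpha=-1$ case, and here your proposal has a genuine gap. You write that the $\alpha=-1$ structural result is ``the $\alpha\notin\integers_-$ borderline instance handled by Theorem \ref{t:structalpha>-1}'' and should be ``extracted'' from that proof; but $-1\in\integers_-$, so Theorem \ref{t:structalpha>-1} does not cover it, and its proof cannot simply be extracted: the reduction in Lemma \ref{l:qaprimitiveinfinity}(i) requires $\alpha\notin\integers_-$, and the dominated-convergence estimate there needs $\alpha+k>-1$. Since you then proceed by ``granting that'', the necessity direction is circular --- you are assuming the very case you need to prove. (A side remark: the homogeneous distributions of degree $-1$ are spanned by $\delta$ and $x^{-1}$, not by $x_\pm^{-1}$; the latter, understood as $\operatorname{Pf}(H(\pm x)/x)$, are not homogeneous.)

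The paper's device for the $k=1$ necessity, which is the missing idea, is to \emph{multiply by $x$}: after excising a neighbourhood of the origin one sets $g(x)=x\tilde f(x)$, which has quasiasymptotic of degree $0$ and therefore falls under Lemma \ref{l:quasiStrucSup}. From the resulting decomposition $g=\sum_m g_m^{(m)}$ one reconstructs the $f_j$ explicitly via
\[
\tilde f_j(x)=\frac{x^{j-1}}{j!}\sum_{m\ge j} m!\,g_m(x)\,x^{-m},
\]
and checks the bounds and limits using \eqref{eq:Msumfacgrowth}. The condition \eqref{eq:fstructnidasymp2} is then read off from the sufficiency computation. For the sufficiency itself, the paper again avoids a delicate direct estimate at the critical index: it applies Theorem \ref{t:structalpha>-1} to the primitive $\sum_{m\ge1}f_m^{(m-1)}$ (degree $0$), differentiates, and treats $f_0$ separately by analysing $F(x)=\int_0^x f_0$, which yields \eqref{eq:asymptotically homogeneous in quasi} and makes the appearance of both $\delta$ and $\operatorname{Pf}(H(\pm x)/x)$, together with the constants in \eqref{eq:constantsk}, transparent. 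Your proposed direct dominated-convergence computation for sufficiency is plausible but, as you note, the $m=k-1$ term does not fit the same estimate, and you would in effect have to reproduce this primitive analysis anyway.
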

	
	\begin{proof} In view of Lemma \ref{l:qaprimitiveinfinity}(ii) we may assume that $k=1$.
	
	\emph{Necessity.} We start showing the necessity of the conditions if $f$ has the quasiasymptotic behavior \eqref{eq:strucquasi-k}. 
Take a compactly supported ultradistribution $f_c$ that coincides with $f$ on $[-e,e]$ and consider $\tilde{f}=f-f_{c}$, so that $\supp (f-f_{c})\cap [-e,e]=\emptyset$. 
We set  $g(x) = x(f(x)-f_{c}(x))$, which
  has quasiasymptotic behavior 
	$g(\lambda x)\sim \beta L(\lambda)$ in   $\uddspacereal{\beurou}$ as $\lambda\to\infty. $
Splitting $g$ as the sum of two distributions supported on $(-\infty,-e)$ and $(e,\infty)$ respectively, we can apply Lemma \ref{l:quasiStrucSup} to obtain its structure as
			$ g = \sum_{m = 0}^{\infty} g_{m}^{(m)} , $
where each of the functions has support in $(-\infty, -e)\cup (e, \infty)$, satisfies the corresponding bounds implied by the lemma, and is such that the limits $\lim_{x\to \pm \infty}x^{-m}g_{m}(x)/L(|x|)$ exist.  Define, for any $j \in \naturals$, the following continuous functions
			\[ \tilde{f}_{j}(x) = \frac{x^{j - 1}}{j!} \sum_{m = j}^{\infty} m! g_{m}(x) x^{-m} , \qquad x \neq 0 , \]
and $\tilde{f}_{j}(0)=0$.
		Let us verify they satisfy the requirements that the $f_j$ should satisfy. First of all, for some $\ell > 0$ (any $\ell > 0$) and $C=C_{\ell}>0$,
			\[ \left| \tilde{f}_{j}(x) \right| \leq C \frac{|x|^{j - 1}}{j!} L(|x|) \sum_{m = j}^{\infty} \frac{m! \ell^{m}}{M_{m}} \leq C' |x|^{j - 1} \frac{\ell^{j}}{M_{j}} L(|x|) , \]
		by \eqref{eq:Msumfacgrowth} from Lemma \ref{l:Msumfacgrowth}. This not only shows  that each $\tilde{f}_{j}$ is well-defined and continuous on $\reals$, but also provides the bounds \eqref{eq:fstructnidbound} for them. From dominated convergence we infer the existence of
			\[ \lim_{x \rightarrow \pm \infty} \frac{\tilde{f}_{j}(x)}{x^{j - 1} L(|x|)} = \lim_{x \rightarrow \pm \infty} \frac{1}{j!} \sum_{m = j}^{\infty} \frac{m! g_{m}(x)}{x^{m} L(|x|)} = \frac{1}{j!} \sum_{m = j}^{\infty} \lim_{x \rightarrow \pm\infty} \frac{m! g_{m}(x)}{x^{m} L(|x|)} .\]
Take an arbitrary $\phi \in \udspacereal{\beurou}$ and let $\varphi \in \udspacereal{\beurou}$ be another corresponding test function that coincides with $\phi$ on $\mathbb{R}\setminus{(-e,e)}$, while its support does not contain the origin. We then have
		 	\begin{align*} 
				\ev{\tilde{f} (x)}{\phi(x)} &=  \ev{g(x)}{\frac{\varphi(x)}{x}} 
			= \sum_{m = 0}^{\infty} \sum_{j = 0}^{m} (-1)^{m} {m \choose j} \ev{g_{m}(x)}{(-1)^{m - j} (m - j)! \frac{\varphi^{(j)}(x)}{x^{m - j + 1}}} \\
										  &= \sum_{j = 0}^{\infty} \frac{(-1)^{j}}{j!} \sum_{m = j}^{\infty} \ev{m! x^{j- 1} \frac{g_{m}(x)}{x^{m}}}{\varphi^{(j)}(x)} 
										  = \sum_{j = 0}^{\infty} \ev{\tilde{f}_{j}^{(j)}}{\phi(x)} . 
			\end{align*}
Applying the first structural theorem to $f_c$ as in the proof of Theorem \ref{t:structalpha>-1}, we obtain compactly supported continuous functions $g_{m}$ such that $f_m=\tilde{f}_m+g_m$ satisfy \eqref{eqext:fstructnid}, \eqref{eq:fstructnidasymp}, and \eqref{eq:fstructnidbound}.
The necessity of \eqref{eq:fstructnidasymp2} follows from \eqref{eq:asymptotically homogeneous in quasi} below. That \eqref{eq:constantsk} must necessarily hold will also be shown below in the proof of the converse.	
	
\emph{Sufficiency.} Conversely, assume that \eqref{eqext:fstructnid} holds with $f_m$ fulfilling \eqref{eq:fstructnidasymp}, \eqref{eq:fstructnidasymp2} and \eqref{eq:fstructnidbound} (recall we work with the reduction $k=1$). We assume without loss of generality that $L(x)$ is everywhere continuous and vanishes for $x\leq 1$. We consider
$
g= \sum_{m = 1}^{\infty} f_{m}^{(m - 1)}.
$
It follows from Theorem \ref{t:structalpha>-1} that $g$ has quasiasymptotic behavior of degree 0 with respect to $L(\lambda)$, and differentiation then yields
$$
f(\lambda x)-f_{0}(\lambda x)= g'(\lambda x)\sim (\gamma-c_{0}^{*})  \frac{L(\lambda)}{\lambda}\delta(x) \quad \mbox{ in } \uddspacereal{\beurou} \mbox{ as }\lambda\to\infty,
$$
with $\gamma$ precisely given as in (\ref{eq:constantsk}). It thus remains to determine the quasiasymptotic properties of $f_{0}$. Write $F(x)= \int_{0}^{x} f_{0}(t)dt$. Since $f_{0}(\pm x)\sim \pm c^{\pm}_{0}L(x)/x$, $x\to\infty,$ one readily shows that
\begin{align*}
F(\lambda x)H(\pm x)&= F(\pm \lambda)H(\pm x)+ c_{0}^{\pm} \int_{\lambda}^{\pm \lambda x}\frac{L(t)}{t}dt+ o\left(L(\lambda)\right) 
\\
&= F(\pm \lambda)H(\pm x)+ c^{\pm}_{0}L(\lambda)H(\pm x)\log |x| + o\left(L(\lambda)\right), \quad \lambda \to\infty,
\end{align*}
uniformly for $x$ on compact intervals, and in particular the relation holds in $\uddspacereal{\beurou}$. Differentiating
\[
F(\lambda x)= F(-\lambda)H(-x)+ F(\lambda) H(x)+ L(\lambda)\left(c^{-}_{0}H(-x)+ c^{+}_{0} H(x)\right)\log |x|+o
\left(L(\lambda)\right) ,
\]
we conclude that 
\begin{equation}
\label{eq:asymptotically homogeneous in quasi}
f_{0}(\lambda x)= \frac{F(\lambda)-F(-\lambda)}{\lambda} \delta(x)+ \frac{L(\lambda)}{\lambda}\left(c_{0}^{-}\operatorname*{Pf}\left(\frac{H(-x)}{x}\right)+ c_{0}^{+}\operatorname*{Pf}\left(\frac{H(x)}{x}\right)\right)+ o\left(\frac{L(\lambda)}{\lambda}\right),
\end{equation}
whence the result follows.		
	\end{proof}

\subsection{Extension from $\reals\setminus\{0\}$ to $\reals$}
\label{subsec:extension}
The methods employed in the previous two subsections also allow us to study the following question. Suppose that the restriction of $f\in \uddspacereal{\beurou}$
 to $\mathbb{R}\setminus\{0\}$ is known to have quasiasymptotic behavior in $\uddspace{\beurou}{\mathbb{R}\setminus\{0\}}$, what can we say about the quasiasymptotic properties of $f$? In view of symmetry considerations, it is clear that it suffices to restrict our attention to ultradistributions supported on $[0,\infty)$. 
	\begin{theorem}
		\label{t:extensioninftyalpha}
Suppose that $f \in \uddspacereal{\beurou}$ is supported in $[0,\infty)$ and has quasiasymptotic behavior 
$
f(\lambda x)\sim c \lambda^{\alpha} L(\lambda)x^{\alpha}$ in  $\uddspacereal{\beurou} (0,\infty)$ as  $\lambda\to\infty.$

			\begin{itemize}
				\item[(i)] If $\alpha > -1$, then $f(\lambda x)\sim c \lambda^{\alpha} L(\lambda)x_{+}^{\alpha}$ in $\uddspacereal{\beurou}$ as $\lambda\to\infty$.
				\item [(ii)] If $\alpha < -1$ and $N \in \naturals$ is such that $-(N + 1) < \alpha < -N$, then there exist constants $a_{0}, \ldots, a_{N - 1}$ such that
					\[ f(\lambda x) - \sum_{n = 0}^{N - 1} a_{n} \frac{\delta^{(n)}(x)}{\lambda^{n+1}} \sim c \lambda^{\alpha} L(\lambda)x_{+}^{\alpha} \quad \mbox{ in } \uddspacereal{\beurou}  \mbox{ as } \lambda\to\infty, \]
	
				\item [(iii)] If $\alpha=-k\in\mathbb{Z}_{-}$, then there is a function $b$ satisfying\footnote{Such functions are called associate homogeneous of degree 0 with respect to $L$ in \cite{stevan2011asymptotic,StructThQADistrInf}. They coincide with functions of the so-called De Haan class \cite{bingham1989regular}.} for each $a>0$
			\begin{equation}
				\label{eq:asshomogeneous}
				b(ax) = b(x) + c\frac{(-1)^{k - 1}}{(k - 1)!}  L(x) \log a + \littleoh{L(x)},
			\end{equation}
$x \rightarrow \infty$,	and constants $a_{0}, \ldots, a_{k - 1}$ such that
			\begin{equation}
				\label{eq:extensioninftynid}
			 	f(\lambda x) = c \frac{L(\lambda)}{\lambda^{k}} \operatorname*{Pf}\left(\frac{H(x)}{x^{k}}\right) + \frac{b(\lambda)}{\lambda^{k}} \delta^{(k - 1)}(x) + \sum_{j = 0}^{k - 1} a_{j} \frac{\delta^{(j)}(x)}{\lambda^{j + 1}} + \littleoh{\frac{L(\lambda)}{\lambda^{k}}} ,
			\end{equation}
		 in $\uddspacereal{\beurou}$ as $\lambda \rightarrow \infty$. 
			\end{itemize}
	\end{theorem}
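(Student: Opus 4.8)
The plan is to peel off the part of $f$ concentrated at the origin and reduce everything to Lemma~\ref{l:quasiStrucSup}, Theorem~\ref{t:structalpha>-1} and Theorem~\ref{t:fstructnid}. Write $f=f_{c}+\tilde{f}$ where $f_{c}=\chi f\in\udespacereal{\beurou}$ for a cut-off $\chi\in\udspacereal{\beurou}$ that equals $1$ near the origin, chosen so that $\supp\tilde{f}\subset(e,\infty)$; this is possible because $\supp f\subset[0,\infty)$. Since the dilate $\phi(\cdot/\lambda)$ of any $\phi\in\udspacereal{\beurou}(0,\infty)$ eventually has support disjoint from the compact set $\supp f_{c}$, the pairing $\ev{f_{c}(\lambda x)}{\phi(x)}$ vanishes for large $\lambda$; hence $f_{c}$ has vanishing quasiasymptotics in $\uddspacereal{\beurou}(0,\infty)$, and therefore $\tilde{f}(\lambda x)\sim c\lambda^{\alpha}L(\lambda)x^{\alpha}$ in $\uddspacereal{\beurou}(0,\infty)$. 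As Lemma~\ref{l:quasiStrucSup} is valid for every real $\alpha$, it yields continuous functions $\tilde{f}_{m}$ with $\supp\tilde{f}_{m}\subset(e,\infty)$, $\tilde{f}=\sum_{m\geq0}\tilde{f}_{m}^{(m)}$, limits $\tilde{f}_{m}(x)/(x^{\alpha+m}L(x))\to c_{m}^{+}$, and bounds $|\tilde{f}_{m}(x)|\leq C\ell^{m}M_{m}^{-1}x^{\alpha+m}L(x)$.

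Let $\kappa$ denote the smallest non-negative integer with $\alpha+\kappa>-1$ (thus $\kappa=0$, $N$, $k$ in the three cases) and split $\tilde{f}=\tilde{f}_{\mathrm{t}}+\tilde{f}_{\mathrm{h}}$, with \emph{tail} $\tilde{f}_{\mathrm{t}}=\sum_{m\geq\kappa}\tilde{f}_{m}^{(m)}$ and \emph{head} $\tilde{f}_{\mathrm{h}}=\sum_{0\leq m<\kappa}\tilde{f}_{m}^{(m)}$ (a finite sum, empty when $\alpha>-1$). With all negative-side limits equal to $0$, the tail satisfies precisely the hypotheses of the converse direction of Theorem~\ref{t:structalpha>-1} (when $\alpha\notin\integers_{-}$) or of Theorem~\ref{t:fstructnid} with vanishing $(k-1)$-th term (when $\alpha=-k$), so it has a quasiasymptotics in $\uddspacereal{\beurou}$ of the announced one-sided form: $\tilde{f}_{\mathrm{t}}(\lambda x)\sim\bigl(\sum_{m\geq\kappa}c_{m}^{+}\tfrac{\Gamma(\alpha+m+1)}{\Gamma(\alpha+1)}\bigr)\lambda^{\alpha}L(\lambda)x_{+}^{\alpha}$, respectively $\tilde{f}_{\mathrm{t}}(\lambda x)\sim\bigl(\sum_{m\geq k}c_{m}^{+}\bigr)L(\lambda)\lambda^{-k}\delta^{(k-1)}(x)$. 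For the head, we compute term by term. When $\alpha\notin\integers_{-}$, each $\tilde{f}_{m}$ with $m<\kappa$ belongs to $L^{1}(\reals)$, and, expanding the test function in its Taylor polynomial at the origin and estimating the resulting integrals by Karamata's theorem and Potter's bound, one gets $\tilde{f}_{m}(\lambda x)=c_{m}^{+}\lambda^{\alpha+m}L(\lambda)x_{+}^{\alpha+m}+\sum_{0\leq n<-\alpha-m-1}\tfrac{(-1)^{n}\mu_{m,n}}{n!}\delta^{(n)}(x)\lambda^{-n-1}+\littleoh{\lambda^{\alpha+m}L(\lambda)}$ in $\uddspacereal{\beurou}$, with $\mu_{m,n}=\int x^{n}\tilde{f}_{m}(x)\,dx$; differentiating $m$ times, using $(x_{+}^{\alpha+m})^{(m)}=\tfrac{\Gamma(\alpha+m+1)}{\Gamma(\alpha+1)}x_{+}^{\alpha}$, and summing over $m<\kappa$ expresses $\tilde{f}_{\mathrm{h}}(\lambda x)$ as a constant times $\lambda^{\alpha}L(\lambda)x_{+}^{\alpha}$ plus finitely many terms $\delta^{(n)}(x)\lambda^{-n-1}$, $n\leq\kappa-1$, up to $\littleoh{\lambda^{\alpha}L(\lambda)}$. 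When $\alpha=-k$, the head functions have negative integral degree $m-k$, and instead one uses the computation behind relation~\eqref{eq:asymptotically homogeneous in quasi} in the degree $-1$ case together with its iterates (obtained by one integration and Karamata's theorem); this produces for $\tilde{f}_{m}$ a term $c_{m}^{+}\lambda^{m-k}L(\lambda)\operatorname*{Pf}(H(x)/x^{k-m})$, a term $(-1)^{k-m-1}\tfrac{\Psi_{m}(\lambda)}{(k-m-1)!}\delta^{(k-m-1)}(x)\lambda^{-(k-m)}$ whose coefficient $\Psi_{m}(\lambda)=\int_{e}^{\lambda}t^{k-m-1}\tilde{f}_{m}(t)\,dt$ is of De~Haan class with $\Psi_{m}(a\lambda)-\Psi_{m}(\lambda)\sim c_{m}^{+}L(\lambda)\log a$ (because $\int_{\lambda}^{a\lambda}L(t)t^{-1}\,dt\sim L(\lambda)\log a$), and lower order $\delta$-terms; differentiating $m$ times turns $\operatorname*{Pf}(H(x)/x^{k-m})$ into a multiple of $\operatorname*{Pf}(H(x)/x^{k})$ plus $\delta$-derivatives of order $<k-1$, and $\delta^{(k-m-1)}$ into $\delta^{(k-1)}$.

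It then remains to add $f=f_{c}+\tilde{f}_{\mathrm{h}}+\tilde{f}_{\mathrm{t}}$. The compactly supported $f_{c}$ contributes, via the moment asymptotic expansion, $\sum_{0\leq n<\kappa}\tfrac{(-1)^{n}\ev{f_{c}}{y^{n}}}{n!}\delta^{(n)}(x)\lambda^{-n-1}$ up to $O(\lambda^{-\kappa-1})=\littleoh{\lambda^{\alpha}L(\lambda)}$ (note $\alpha>-(\kappa+1)$). Collecting terms, all $\delta$-derivatives of order $\leq\kappa-1$ assemble into a single finite sum $\sum_{n}a_{n}\delta^{(n)}(x)\lambda^{-n-1}$; in case (iii) the functions $\Psi_{m}$, together with the slowly varying $\delta^{(k-1)}$-contributions of $\tilde{f}_{\mathrm{t}}$ and $f_{c}$, combine into the coefficient $b(\lambda)$ of $\delta^{(k-1)}(x)\lambda^{-k}$, which by construction satisfies~\eqref{eq:asshomogeneous}; and the coefficient of $\lambda^{\alpha}L(\lambda)x_{+}^{\alpha}$ (respectively of $L(\lambda)\lambda^{-k}\operatorname*{Pf}(H(x)/x^{k})$) is forced to equal $c$ by restricting the whole relation to $\uddspacereal{\beurou}(0,\infty)$, where only that term survives and where $f(\lambda x)\sim c\lambda^{\alpha}L(\lambda)x^{\alpha}$ is known. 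In case (i) one has $\kappa=0$, so the head and all corrections are absent and $f_{c}$ has vanishing quasiasymptotics even in $\uddspacereal{\beurou}$ (because $\lambda^{-\alpha-1}/L(\lambda)\to0$), giving $f(\lambda x)\sim c\lambda^{\alpha}L(\lambda)x_{+}^{\alpha}$ directly. The most delicate point will be the head computation when $\alpha=-k$: one must keep the negative-integer-degree contributions under enough control so that the coefficient of $\operatorname*{Pf}(H(x)/x^{k})$ turns out to be exactly $c$ and the coefficient $b$ of $\delta^{(k-1)}$ acquires precisely the De~Haan normalization $c(-1)^{k-1}/(k-1)!$, the compatibility of these two facts with the value of $c$ read off from the restriction to $(0,\infty)$ being a nontrivial consistency check resting on the standard identities for the derivatives of the finite part distributions $\operatorname*{Pf}(H(x)/x^{j})$ and on Karamata's estimate above.
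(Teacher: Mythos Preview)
Your approach is correct and yields the theorem, but it follows a different and more labor-intensive path than the paper's. The paper exploits Lemma~\ref{l:qaprimitiveinfinity} to reduce the degree \emph{before} invoking the structural decomposition: after stripping off a compactly supported piece via the moment asymptotic expansion (which already produces the $\delta^{(n)}/\lambda^{n+1}$ corrections), for part~(ii) it lifts $\tilde f$ to degree $>-1$ by taking $N$ primitives, applies case~(i), and differentiates back; for part~(iii) it first reduces to $k=1$ via Lemma~\ref{l:qaprimitiveinfinity}(ii) and then carries out the single degree~$-1$ computation behind \eqref{eq:asymptotically homogeneous in quasi} once, for a single continuous function $f_0$, with $b(\lambda)=\int_1^\lambda f_0(t)\,dt$. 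By contrast, you avoid Lemma~\ref{l:qaprimitiveinfinity} altogether and instead split the Lemma~\ref{l:quasiStrucSup} decomposition into a ``tail'' (handled by the sufficiency direction of Theorems~\ref{t:structalpha>-1}/\ref{t:fstructnid}) and a finite ``head'' of low-index terms, each of which you analyze by a direct pointwise computation in $\mathcal D'$. This works because every head term $\tilde f_m$ is an ordinary locally integrable function with known pointwise asymptotics, so you are effectively re-deriving the classical distributional extension theorem term by term; the price is precisely the bookkeeping you flag as ``the most delicate point'' --- tracking the $\delta$-terms produced by derivatives of $\operatorname{Pf}(H(x)/x^{j})$ and verifying that the accumulated De~Haan coefficient of $b$ comes out to $c(-1)^{k-1}/(k-1)!$. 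All of this can indeed be checked (your consistency argument via restriction to $(0,\infty)$ pins down the $\operatorname{Pf}$ coefficient as $c$, and the De~Haan coefficient then follows from the identities relating $(\operatorname{Pf}(H(x)/x^{j}))^{(m)}$ to $\operatorname{Pf}(H(x)/x^{j+m})$ and lower $\delta$'s), but the paper sidesteps the whole computation by reducing to $k=1$. The paper's route is shorter; yours is more self-contained in that it never relies on the primitive-lifting trick.
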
 
	
	\begin{proof}
	The moment asymptotic expansion \cite{Schmidt2005} says that we may assume that, say, $\supp f\subset (e,\infty)$ by removing a neighborhood of the origin. So, we can apply exactly the same argument as in the proof of Theorem \ref{t:structalpha>-1} (via Lemma \ref{l:quasiStrucSup} and Lemma \ref{l:qaprimitiveinfinity}(i)) to show parts (i) and (ii). For (iii), we assume without loss of generality that $k=1$ (Lemma \ref{l:qaprimitiveinfinity}(ii)) and apply the same argument as in the proof of Theorem \ref{t:fstructnid} to conclude that 
$f(\lambda x)= f_{0}(\lambda x)+ \gamma L(\lambda)\delta(\lambda x)+o\left(L(\lambda)/\lambda\right)$ in $\uddspacereal{\beurou},$
 where the continuous function $f_0$ has also support in $(e,\infty)$ and satisfies $f_{0}(x)\sim c L(x)/x$, $x\to \infty$, in the ordinary sense. At this point the result can be derived from \cite[Theorem 4.3]{StructThQADistrInf} (see also \cite[Theorem 2.38, p. 155]{stevan2011asymptotic}), but we might argue directly as follows. In fact, we proceed in the same way we arrived at (\ref{eq:asymptotically homogeneous in quasi}). Set $b(x)=\int_{1}^{x}f_{0}(t)dt$,
then, uniformly for $x$ in compact subsets of $(0,\infty)$,
$$
b(\lambda x)=b(\lambda)H(x)+c\int_{\lambda}^{\lambda x}\frac{L(t)}{t}dt+ o(L(\lambda))= b(\lambda)H(x)+cL(\lambda)H(x) \log x+ o(L(\lambda)),
$$
so that differentiation finally shows
$$
f_{0}(\lambda x)= \frac{b(\lambda)}{\lambda} \delta(x)+ c\frac{L(\lambda)}{\lambda}\operatorname*{Pf}\left(\frac{H(x)}{x}\right)+ o\left(\frac{L(\lambda)}{\lambda}\right) \qquad \mbox{in } \mathcal{D}'.
$$	\end{proof}

\section{The structure of quasiasymptotics at the origin}\label{structure quasiasymptotic at the origin}
We now focus our attention on quasiasymptotic behavior at the origin. The reader should notice that Lemma \ref{l:qaprimitiveinfinity} holds for quasiasymptotics at the origin as well. Furthermore, it is a simple consequence of the definition that quasiasymptotics at the origin is a local property, in the sense that two ultradistributions that coincide in a neighborhood of the origin must have precisely the same quasiasymptotic properties. Throughout this section $L$ stands for a slowly varying function at the origin and we set $\tilde{L}(x)=L(1/x)$. From now on, by convention the parameters $\varepsilon\to 0^{+}$ and $\lambda\to \infty$.

We will reduce the analysis of the structure of quasiasymptotics at the origin to that of the quasiasymptotics at infinity the via the change of variables $x\leftrightarrow 1/x$. We therefore need to see how this substitution acts on derivatives.
	\begin{lemma}
		\label{l:faadibrunoreciprocal}
		Let $\phi \in C^{\infty}(\reals\setminus\{0\})$ and set $\psi(x) := x^{-2} \phi(1 / x)$. Then for any $m \in \mathbb{N}$, there exist constants $c_{m, 0}, \ldots, c_{m, m}$ such that
			\begin{equation}
				\label{eq:faadibrunoreciprocal}
				\frac{d^{m}}{dx^{m}} \left( \psi(x) \right) = \sum_{j = 0}^{m}  c_{m, j} \frac{\phi^{(j)}(1 / x)}{x^{m + j + 2}} , 
			\end{equation}
		where we have the bounds
			\begin{equation}
				\label{eq:faadibrunoreciprocalbound}
				|c_{m, j}| \leq \frac{m!}{j!} 4^{m} , \qquad 0 \leq j \leq m . 
			\end{equation}
	\end{lemma}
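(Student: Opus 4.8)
The plan is to prove the identity \eqref{eq:faadibrunoreciprocal} by induction on $m$, tracking a recursion for the coefficients $c_{m,j}$ and then extracting the bound \eqref{eq:faadibrunoreciprocalbound} from that recursion. First I would establish the base case: for $m=0$ we have $\psi(x)=x^{-2}\phi(1/x)$, so $c_{0,0}=1$ and the bound $|c_{0,0}|\le 0!/0!\cdot 4^{0}=1$ holds.

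For the inductive step, I would assume \eqref{eq:faadibrunoreciprocal} holds for some $m$ and differentiate once more. Differentiating a generic term $c_{m,j}\,\phi^{(j)}(1/x)\,x^{-(m+j+2)}$ with respect to $x$ uses $\frac{d}{dx}\phi^{(j)}(1/x) = -x^{-2}\phi^{(j+1)}(1/x)$ and the power rule, yielding
\[
-c_{m,j}\,\frac{\phi^{(j+1)}(1/x)}{x^{m+j+4}} - (m+j+2)\,c_{m,j}\,\frac{\phi^{(j)}(1/x)}{x^{m+j+3}}.
\]
Collecting the coefficient of $\phi^{(j)}(1/x)\,x^{-(m+1+j+2)}$ in $\psi^{(m+1)}(x)$ gives the recursion
\[
c_{m+1,j} = -c_{m,j-1} - (m+j+2)\,c_{m,j},
\]
with the convention $c_{m,j}=0$ for $j<0$ or $j>m$. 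This confirms that $\psi^{(m+1)}$ has the claimed form with $j$ running from $0$ to $m+1$.

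It remains to propagate the bound. Using the inductive hypothesis $|c_{m,j-1}|\le \frac{m!}{(j-1)!}4^{m}$ and $|c_{m,j}|\le \frac{m!}{j!}4^{m}$, together with the triangle inequality in the recursion, I would estimate
\[
|c_{m+1,j}| \le \frac{m!}{(j-1)!}4^{m} + (m+j+2)\frac{m!}{j!}4^{m} = \frac{m!}{j!}4^{m}\bigl(j + m + j + 2\bigr) = \frac{m!}{j!}4^{m}(m+2j+2).
\]
Since $0\le j\le m+1$, we have $m+2j+2 \le m + 2(m+1) + 2 = 3m+4 \le 4(m+1)$, so $|c_{m+1,j}| \le \frac{m!}{j!}4^{m}\cdot 4(m+1) = \frac{(m+1)!}{j!}4^{m+1}$, which is exactly \eqref{eq:faadibrunoreciprocalbound} at stage $m+1$. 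The main (and only mild) obstacle is bookkeeping the boundary cases $j=0$ and $j=m+1$ in the recursion — but in both cases one of the two terms vanishes, and the inequality $m+2j+2\le 4(m+1)$ still holds, so the estimate goes through uniformly. This completes the induction and the proof.
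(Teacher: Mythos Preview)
Your induction argument is correct and complete. The paper takes a different route: it computes the constants $c_{m,j}$ explicitly via the Fa\`a di Bruno formula, first expanding $\frac{d^{k}}{dx^{k}}(\phi(1/x))$ in terms of the Bell polynomials $B_{k,j}$, evaluating $B_{k,j}(1!,2!,\dots,(k-j+1)!)=\frac{k!(k-1)!}{j!(j-1)!(k-j)!}$ from their generating function, and then combining with the factor $x^{-2}$ by Leibniz to obtain closed forms $c_{m,0}=(-1)^{m}(m+1)!$ and $c_{m,j}=(-1)^{m}\frac{m!}{j!}\sum_{k=j}^{m}(m-k+1)\binom{k-1}{j-1}$ for $j\ge 1$, from which the bound \eqref{eq:faadibrunoreciprocalbound} is then read off. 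Your recursive approach is more elementary: it bypasses Bell polynomials and the binomial-sum estimates needed to extract the bound from the closed form, and the single inequality $m+2j+2\le 4(m+1)$ makes the inductive step transparent. The trade-off is that you do not obtain the explicit values of the $c_{m,j}$, but since only the bound \eqref{eq:faadibrunoreciprocalbound} is used later in the paper (in the proof of Theorem~\ref{t:structoriginalpha>-1}), nothing is lost.
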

	
	\begin{proof}
		Applying the Fa\`{a} di Bruno formula \cite[Eq. (2.2)]{Johnson02thecurious},
					\begin{equation*}
				\frac{d^{k}}{dx^{k}} \left( \phi(1/x) \right) 
		=									 \sum_{j = 1}^{k} (-1)^{k} x^{-(k + j)} \phi^{(j)}(1/x) B_{k, j}(1!, 2!, \ldots, (k - j + 1)!) ,
			\end{equation*}
  where $B_{k, j}$ are the Bell polynomials; from their generating function identity \cite[ (3a'), p. 133]{AdvancedCombinatorics}		we infer that 
			\[ B_{k, j}(1!, \ldots, (k - j + 1)!) = \left. \frac{d^{k}}{dt^{k}} \left( \frac{1}{j!} \left( \frac{t}{1 - t} \right)^{j} \right) \right|_{t = 0} = \frac{k!(k - 1)!}{j! (j - 1)!(k-j)!} . \]
Therefore, we obtain that \eqref{eq:faadibrunoreciprocal} holds with 
$$
c_{m,0}= (-1)^{m} (m + 1)! \quad \mbox{ and } \quad c_{m,j}= (-1)^{m}\frac{m!}{j!} \sum_{k=j}^{m} (m-k+1) \binom{k-1}{j-1}
$$
when $0<j\leq m$, whence one readily obtains the bound \eqref{eq:faadibrunoreciprocalbound}.
	\end{proof}

	\begin{theorem}
		\label{t:structoriginalpha>-1}
		Let $\alpha \notin \integers_{-}$ and let $k \in \naturals$ be the smallest integers such that $-(k + 1) < \alpha$. Then, $f \in \uddspacereal{\beurou}$ has quasiasymptotic behavior
		
\begin{equation}
\label{eqquasiorigin noninteger}		
f(\varepsilon x)\sim \varepsilon^{\alpha}L(\varepsilon)(c_{-}x_{-}^{\alpha}+c
_{+}x_{+}^{\alpha}) \quad \mbox{ in }\uddspacereal{\beurou} \mbox{ as }\varepsilon\to0^{+}
\end{equation}	
		 if and only if there exist functions $f_m\in L^{1}(-1,1)$, $m \geq k$, that are continuous on $[-1,1]\setminus\{0\}$  such that \eqref{eq:fstructalpha>-1} holds on $(-1,1)$,		
			\begin{equation}
				\label{eq:structoriginalpha>-1limit}
				c^{\pm}_{m}=\lim_{x \rightarrow 0\pm} \frac{f_{m}(x)}{x^{m}|x|^{\alpha} L(|x|)}, \qquad m \geq k ,
			\end{equation}
		exist, and furthermore, for some $\ell > 0$ (for any $\ell > 0$) there is a $C > 0$ such that
			\begin{equation}
				\label{eq:structoriginalpha>-1bound}
				|f_{m}(x)| \leq C \frac{\ell^{m}}{M_{m}} |x|^{\alpha + m} L(|x|) , \qquad 0 < |x| \leq 1 , 
			\end{equation}
		for all $m \geq k$. Moreover, the relation 
                \eqref{eq:strucalpha>-1constants} must hold.
	\end{theorem}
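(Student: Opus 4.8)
The plan is to reduce Theorem~\ref{t:structoriginalpha>-1} to Theorem~\ref{t:structalpha>-1} by means of the reciprocal change of variables $x\leftrightarrow 1/x$, using Lemma~\ref{l:faadibrunoreciprocal} to control the derivatives produced by the substitution. First, as in the proof of Theorem~\ref{t:structalpha>-1}, one invokes Lemma~\ref{l:qaprimitiveinfinity} (valid at the origin) to pass to primitives and assume $\alpha>-1$, so $k=0$; one also notes that quasiasymptotics at the origin is a local property, so only the behaviour of $f$ and of the $f_m$ on $(-1,1)$ is relevant. To set up the correspondence, given $\phi\in\udspacereal{\beurou}$ supported near the origin, the test function $\psi(x):=x^{-2}\phi(1/x)$ (extended by $0$ at the origin) belongs to $\udspacereal{\beurou}(\reals\setminus\{0\})$ and in fact, after cutting off, lies in a space to which the structural theorem at infinity applies; the identity $\langle f(\varepsilon x),\phi(x)\rangle = \langle \check f(\lambda x),\psi(x)\rangle$ with $\lambda=1/\varepsilon$ and $\check f$ the pullback of $f$ under $x\mapsto 1/x$ translates the quasiasymptotics at $0$ of $f$ with respect to $\varepsilon^{\alpha}L(\varepsilon)$ into quasiasymptotics at $\infty$ of $\check f$ with respect to $\lambda^{-\alpha-2}\widetilde L(\lambda)$, since $|x|^{\alpha}$ pulls back to $|x|^{-\alpha-2}$ up to the sign-split coefficients. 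A short bookkeeping shows the comparison function is again of the allowed form $\lambda^{\beta}\widetilde L(\lambda)$ with $\beta=-\alpha-2\notin\integers_{-}$ (for $\alpha>-1$, $\beta<-1$, so one lands in the regime handled by Lemma~\ref{l:qaprimitiveinfinity}(i) plus Theorem~\ref{t:structalpha>-1}).

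Next, I would run the two implications separately, mirroring the structure of the proof of Theorem~\ref{t:structalpha>-1}. For necessity: assuming \eqref{eqquasiorigin noninteger}, pull back to get quasiasymptotics at infinity of $\check f$ (after removing a zero neighbourhood, legitimate by Lemma~\ref{l:qalocalproperty} / the moment asymptotic expansion), apply Theorem~\ref{t:structalpha>-1} to obtain a representation $\check f=\sum_m g_m^{(m)}$ with the stated pointwise asymptotics and Denjoy--Carleman bounds, and then push forward term by term. The push-forward of $g_m^{(m)}$ is computed with Lemma~\ref{l:faadibrunoreciprocal}: each term $g_m^{(m)}$ becomes a \emph{finite} combination $\sum_{j\le m} c_{m,j}(\,\cdot\,)$ whose pieces, after collecting, regroup into derivatives $f_j^{(j)}$ of new continuous functions $f_j$ on $[-1,1]\setminus\{0\}$ with $f_j\in L^1(-1,1)$. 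The bound \eqref{eq:faadibrunoreciprocalbound}, $|c_{m,j}|\le (m!/j!)4^{m}$, combined with \eqref{eq:Msumfacgrowth} from Lemma~\ref{l:Msumfacgrowth}, is exactly what guarantees that the ressummed coefficients still satisfy a bound of the form $C\ell^{m}/M_m$ and hence that \eqref{eq:structoriginalpha>-1bound} holds; the pointwise limits \eqref{eq:structoriginalpha>-1limit} come from the corresponding limits at infinity for the $g_m$ via $x\mapsto 1/x$ and the leading-order term $c_{m,0}=(-1)^m(m+1)!$ in Lemma~\ref{l:faadibrunoreciprocal}. For sufficiency: given $f_m$ as in \eqref{eq:structoriginalpha>-1bound}, \eqref{eq:structoriginalpha>-1limit}, one estimates $\langle f(\varepsilon x),\phi(x)\rangle=\frac1\varepsilon\sum_m(-1)^m\int f_m(x)\varepsilon^{-m}\phi^{(m)}(x/\varepsilon)\,dx$ directly, exactly as in the converse part of Theorem~\ref{t:structalpha>-1}: use Potter's estimate \eqref{eq:slowlyvaryingsubpolynomialbound} for $\widetilde L$, split the integral at $|x|=\varepsilon$, use $|\phi^{(m)}|\le C_{\phi,h}h^m M_m$ and \eqref{eq:structoriginalpha>-1bound} to get a geometric-in-$m$ bound that is summable, and then apply dominated convergence twice to land on $c_{-}\int_{-\infty}^0|x|^{\alpha}\phi + c_{+}\int_0^{\infty}x^{\alpha}\phi$ with the constants given by \eqref{eq:strucalpha>-1constants}.

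The formula \eqref{eq:strucalpha>-1constants}, $c_{\pm}=\sum_{m\ge k}c_m^{\pm}\Gamma(\alpha+m+1)/\Gamma(\alpha+1)$, is then read off from the sufficiency computation above (it is literally the identity obtained there), and one checks it is consistent under the reduction to $\alpha>-1$ by tracking how $n$-fold primitives shift the indices — the $\Gamma$-quotients are precisely the coefficients $\Gamma(\alpha+m+1)/\Gamma(\alpha+1)=\alpha(\alpha+1)\cdots(\alpha+m-1)$ that arise when differentiating $x_\pm^{\alpha+m}$ down $m$ times. For the necessity side the same constants are forced because the quasiasymptotic limit $g$ is unique.

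I expect the main obstacle to be the bookkeeping in the necessity direction: the reciprocal substitution turns each single derivative $g_m^{(m)}$ into the $(m+1)$-term sum \eqref{eq:faadibrunoreciprocal}, so to recover a representation $f=\sum_j f_j^{(j)}$ one must interchange the order of a double sum ($\sum_m\sum_{j\le m}$ becomes $\sum_j\sum_{m\ge j}$) and verify that each resulting inner series defining $f_j$ converges \emph{in the appropriate sense} — uniformly on compact subsets of $[-1,1]\setminus\{0\}$, absolutely in $L^1(-1,1)$, and with the ultradifferentiable-type bound intact — before one is allowed to rearrange. This is where Lemma~\ref{l:Msumfacgrowth} and the factorial-type bound \eqref{eq:faadibrunoreciprocalbound} do the real work; the subtlety is that the weights $|x|^{\alpha+m}$ blow up near $0$ when $\alpha+m<0$, which is why one only gets $f_m\in L^1(-1,1)$ and not boundedness, and why the pointwise limits \eqref{eq:structoriginalpha>-1limit} must be stated as one-sided limits at $0$ rather than global estimates. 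Everything else is a faithful transcription of the arguments already carried out for Theorem~\ref{t:structalpha>-1} and Theorem~\ref{t:fstructnid}.
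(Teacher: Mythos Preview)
Your overall strategy---reduce to $\alpha>-1$ via Lemma~\ref{l:qaprimitiveinfinity}, pull back through $x\mapsto 1/x$, apply the structural theorem at infinity, and push the decomposition forward using Lemma~\ref{l:faadibrunoreciprocal} together with Lemma~\ref{l:Msumfacgrowth}---is exactly the paper's approach. The sufficiency argument you sketch is also the paper's. Two points, however, need correction.

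First, a bookkeeping slip: the pullback $\check f(x)=f(1/x)$ has quasiasymptotics at infinity of degree $-\alpha$ with respect to $\widetilde L$, not $-\alpha-2$. For a function, $(x\mapsto|x|^{\alpha})\circ(x\mapsto 1/x)=|x|^{-\alpha}$; the Jacobian factor $x^{-2}$ lives on the \emph{test-function} side (in your $\psi$), not on the distribution. Your identity $\langle f(\varepsilon x),\phi\rangle=\langle\check f(\lambda x),\psi\rangle$ is correct, and it gives $\langle\check f(\lambda x),\psi\rangle\sim\lambda^{-\alpha}\widetilde L(\lambda)\langle\check g,\psi\rangle$. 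Consequently your claim that $\beta\notin\integers_{-}$ also fails: when $\alpha\in\naturals$ one has $-\alpha\in\integers_{-}\cup\{0\}$, and the paper explicitly invokes Theorem~\ref{t:fstructnid} (not Theorem~\ref{t:structalpha>-1}) in that case.

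Second, and more importantly, you omit a step that is genuinely needed and has no analog in the proofs at infinity. The reciprocal substitution only produces the identity $f=\sum_{m}f_{m}^{(m)}$ on $(-1,1)\setminus\{0\}$, since $x\mapsto 1/x$ never sees the origin. Because each $f_m\in L^{1}(-1,1)$, the series defines an element of $\uddspacereal{\beurou}(-1,1)$, so $g:=f-\sum_m f_m^{(m)}$ is an ultradistribution supported at $\{0\}$. You must show $g=0$. The paper does this by writing $g=P(D)\delta=\sum_{n\ge n_0}a_n D^n\delta$ with $a_{n_0}\neq 0$, noting that the already-established sufficiency direction forces $g$ itself to have quasiasymptotics of order $\alpha>-1$ with respect to $L$, and then testing against $\phi\equiv x^{n_0}$ near $0$ to obtain $\varepsilon^{-n_0-\alpha-1}/L(\varepsilon)\to\infty$, a contradiction. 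Without this argument the necessity direction is incomplete: nothing you have written rules out, say, $f=\sum_m f_m^{(m)}+\delta$ on $(-1,1)$.
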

	
	\begin{proof}
		The proof of sufficiency can be done analogously as in Theorem \ref{t:structalpha>-1}. Hence we are only left with necessity. If we can show the theorem for degree larger than $-1$, then the full structure theorem will follow from Lemma \ref{l:qaprimitiveinfinity}(i), hence we assume that $\alpha > -1$ (hence $k=0$). If $f$ has quasiasymptotic behavior with respect to $\varepsilon^{\alpha} L(\varepsilon)$, then $\widetilde{f}(x) := f(1 / x)$ has quasiasymptotic behavior in $\uddspace{\beurou}{\reals\setminus\{0\}}$ with respect to $\lambda^{- \alpha} \widetilde{L}(\lambda)$. Then by Theorem \ref{t:structalpha>-1} or Theorem \ref{t:fstructnid} if $\alpha \in \integers_{+}$ and keeping in mind our observations from Section \ref{subsec:extension}, there exist continuous $\widetilde{f}_{m}$ in $\reals\setminus\{0\}$, $m \geq 0$, that satisfy (\ref{eq:fstructalpha>-1}), (\ref{eq:fstructalpha>-1asymp}) and (\ref{eq:fstructalpha>-1bound}). Consider now for any $m \geq 0$,
			\[ f_{m}(x) := \sum_{k = m}^{\infty} (-1)^{k + m } c_{k, m} \widetilde{f}_{k}(1 / x) x^{m + k} , \]
		where the $c_{k, m}$ are as in Lemma \ref{l:faadibrunoreciprocal}. By (\ref{eq:fstructalpha>-1bound}) and (\ref{eq:faadibrunoreciprocalbound}) it follows that for some $\ell > 0$ (for any $\ell > 0$) and any $0 < |x| \leq 1$,
			\begin{align*}
				|f_{m}(x)| &= \left| \sum_{k = m}^{\infty} (-1)^{k + m } c_{k, m} \widetilde{f}_{k}(1 / x) x^{m + k}  \right| 
					       \leq \sum_{k = m}^{\infty} \frac{k!}{m!} 4^{k} \cdot C \frac{(2\ell)^{k}}{M_{k}} |x|^{\alpha - k} L(|x|) |x|^{m + k} \\
					       &= C |x|^{\alpha + m} L(|x|)\frac{1}{m!} \sum_{k = m}^{\infty} k! \frac{(8\ell)^{k}}{M_{k}} \leq C C_{8\ell} \frac{(8\ell)^{m}}{M_{m}} |x|^{\alpha + m} L(|x|) , 
			\end{align*}
		by  \eqref{eq:Msumfacgrowth} from Lemma \ref{l:Msumfacgrowth}. This not only shows existence and continuity in $[-1, 1]\setminus\{0\}$, but also shows that the $f_{m}$ satisfy (\ref{eq:structoriginalpha>-1bound}). By (\ref{eq:fstructalpha>-1asymp}) and dominated convergence, it also follows that for these functions the  limits (\ref{eq:structoriginalpha>-1limit}) exist. Now take any $\phi \in \udspace{\beurou}{\reals\setminus\{0\}}$ with $\supp \phi \subseteq (-1, 1)$ and set $\psi(x) := \phi(1 / x) x^{-2}$. Then,
			\[ \ev{f(x)}{\phi(x)} = \ev{\widetilde{f}(x)}{\psi(x)} = \sum_{k = 0}^{\infty} \ev{\widetilde{f}_{k}(x)}{(-1)^{k} \psi^{(k)}(x)} . \]
		Since for any $k \in \naturals$, by Lemma \ref{l:faadibrunoreciprocal},
			\begin{align*}
				\int_{- \infty}^{\infty} \widetilde{f}_{k}(x) \psi^{(k)}(x) dx 
								= \sum_{m = 0}^{k} c_{k, m} \int_{- \infty}^{\infty}  \widetilde{f}_{k}(1 / x) \phi^{(m)}(x) x^{m + k} dx ,
			\end{align*}
		it follows by switching the order of summation that
			$f = \sum_{m = 0}^{\infty} f_{m}^{(m)} ,$
		in $\uddspacereal{\beurou}((-1, 1)\setminus\{0\})$. Now as $\alpha > -1$, the latter sum is an element of $\uddspacereal{\beurou}$, so that there is some $g \in \uddspacereal{\beurou}$ with $\supp g \subseteq \{0\}$ for which
			$ f = \sum_{m = 0}^{\infty} f_{m}^{(m)} + g , $
		in $\uddspacereal{\beurou}(-1, 1)$. Since we have already shown sufficiency, the sum has quasiasymptotics with respect to $\varepsilon^{\alpha} L(\varepsilon)$, implying that the same holds for $g$. If $g\neq 0$, we can find an ultradifferentiable operator $P(D) = \sum_{n \geq n_0} a_{n} D^{n}$ of type $\beurou$ such that $g = P(D) \delta$ and $a_{n_0}\neq 0$. Then, for any $\phi \in \udspacereal{\beurou}$,
			\[ \ev{\frac{g(\varepsilon x)}{\varepsilon^{\alpha} L(\varepsilon)}}{\phi(x)} = \sum_{n = n_0}^{\infty} (-1)^{n} a_{n} \frac{\varepsilon^{- n - \alpha - 1}}{L(\varepsilon)} \phi^{(n)}(0) . \]
But if $\phi(x)=x^{n_0}$ in a neighborhood of 0, we conclude that 

$$
\infty=\lim_{\varepsilon \to 0^{+}}\frac{1}{\varepsilon^{n_0 +\alpha+  1} L(\varepsilon)}= \frac{(-1)^{n_0}}{a_{n_0}n_0!} \lim _{\varepsilon\to 0^{+}}\ev{\frac{g(\varepsilon x)}{\varepsilon^{\alpha} L(\varepsilon)}}{\phi(x)},
$$
leading to a contradiction. Therefore, $g$ must be identically 0 and this completes the proof of the theorem. 	\end{proof}
	
The structure for negative integral degree can be described as follows. 

	\begin{theorem}
		\label{t:structoriginnid} Let $f \in \uddspacereal{\beurou}$  and $k \in \integers_{+}$. Then, $f$ has quasiasymptotic behavior 
		
		\begin{equation}
\label{eq:strucquasi-k-0} f(\varepsilon x)\sim \frac{L(\varepsilon)}{\varepsilon^{k}}( \gamma \delta^{(k-1)}(x) + \beta x^{-k}) \quad \mbox{ in } \uddspacereal{\beurou} \mbox{ as }\varepsilon\to0^{+} 
\end{equation}	
 if and only if there are continuous functions $F$ and $f_{m}$ on $[-1, 1]\setminus\{0\}$, $m \geq k$, such that
			\begin{equation}
				\label{eq:structoriginnid}
				f = F^{(k)}+\sum_{m = k }^{\infty} f_{m}^{(m)} \qquad \text{ on } (-1, 1) , 
			\end{equation}
the limits		
			\begin{equation}
				\label{eq:structoriginnidasymp}
				c^{\pm}_{m}=\lim_{x \rightarrow 0\pm} \frac{f_{m}(x)}{x^{ m-k} L(|x|)} , \qquad m \geq k ,
			\end{equation}
			exist,
for some $\ell > 0$ (for any $\ell > 0$) there exists $C =C_{\ell}> 0$ such that 
			\begin{equation}
				\label{eq:structoriginnidbound}
				|f_{m}(x)| \leq C \frac{\ell^{m}}{M_{m}} |x|^{ m-k} L(|x|) , \qquad 0 < |x| \leq 1 ,
			\end{equation}
		for all $m \geq k$, and for any $a > 0$ the limit
	\begin{equation}
				\label{eq:structoriginnidprimitive}
				\lim_{x \rightarrow 0+} \frac{F(ax) -F(-x)}{L(x)} = c^{\ast}_{1} + c^{\ast}_{2} \log a
			\end{equation}		
		exists. In this case,
		
		\begin{equation}
\label{eq:constantskzero}
\gamma= c^{\ast}_1+\sum_{m=k}^{\infty}(c^{+}_{m}-c^{-}_{m}) \quad \mbox{and} \quad \beta= (-1)^{k-1}(k-1)!c^{\ast}_{2}.
\end{equation}	 
	\end{theorem}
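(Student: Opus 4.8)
The plan is to reuse the machinery of Theorem~\ref{t:structoriginalpha>-1}: I would reduce the quasiasymptotic behavior at the origin to one at infinity via the substitution $x\leftrightarrow1/x$ and Lemma~\ref{l:faadibrunoreciprocal}, apply the structural results of Section~\ref{Section structure at infinity}, and translate back; the new features relative to the non-integer case are the term $F^{(k)}$ and the $\delta$-type contributions, which will need extra bookkeeping. As a first step, by Lemma~\ref{l:qaprimitiveinfinity}(ii) (valid at the origin too), I would reduce to $k=1$.

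For sufficiency I would argue as in the sufficiency part of Theorem~\ref{t:fstructnid}, adapted to the origin and using that quasiasymptotics at the origin is local. With $k=1$, put $g:=F+\sum_{m\geq1}f_m^{(m-1)}$, a primitive of $f$ near $0$. The series $\sum_{m\geq1}f_m^{(m-1)}=\sum_{j\geq0}f_{j+1}^{(j)}$ has exactly the form of a degree-$0$ quasiasymptotic at the origin, and the bounds \eqref{eq:structoriginnidbound} with Lemma~\ref{l:Msumfacgrowth} justify the termwise passage to the limit just as in the proof of Theorem~\ref{t:structoriginalpha>-1} with $\alpha=0$; meanwhile $F$, by \eqref{eq:structoriginnidprimitive}, is associate homogeneous of degree $0$ with respect to $L$, hence also has degree-$0$ quasiasymptotics at the origin, carrying a logarithm. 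So $g$ will have degree-$0$ quasiasymptotics of associate homogeneous type, and differentiating it---using that the derivative of $\log|x|$ is the homogeneous distribution $x^{-1}$, exactly as in the passage leading to \eqref{eq:asymptotically homogeneous in quasi}---gives \eqref{eq:strucquasi-k-0} for $k=1$; reading off the coefficients of $\delta$ and of $x^{-1}$ produces \eqref{eq:constantskzero}, and undoing the reduction yields the general-$k$ formulas.

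For necessity, assuming \eqref{eq:strucquasi-k-0} with $k=1$, I would first note that on $\reals\setminus\{0\}$ the $\delta$-term disappears, so $\widetilde f(x):=f(1/x)$ has, in $\uddspace{\beurou}{\reals\setminus\{0\}}$, quasiasymptotics at infinity of degree $1$ with respect to $\widetilde L$, the limit being the image of $\beta x^{-1}$ under the inversion, that is, the odd function $\beta x$. Since $1\notin\integers_-$, Theorem~\ref{t:structalpha>-1} together with the considerations of Subsection~\ref{subsec:extension} (I may, degree $1>-1$ being local at infinity by Lemma~\ref{l:qalocalproperty}, replace $\widetilde f$ by an extension lying in $\uddspacereal{\beurou}$ without altering its quasiasymptotic behavior at infinity) will give continuous $\widetilde f_m$ on $\reals\setminus\{0\}$, $m\geq0$, with $\widetilde f=\widetilde f_0+\sum_{m\geq1}\widetilde f_m^{(m)}$ on $\reals\setminus\{0\}$, satisfying \eqref{eq:fstructalpha>-1asymp} and \eqref{eq:fstructalpha>-1bound} for $\alpha=1$. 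For $m\geq1$ I would then set, exactly as in Theorem~\ref{t:structoriginalpha>-1},
\[
f_m(x):=\sum_{j\geq m}(-1)^{j+m}\,c_{j,m}\,\widetilde f_j(1/x)\,x^{m+j},
\]
with $c_{j,m}$ from Lemma~\ref{l:faadibrunoreciprocal}, and use \eqref{eq:faadibrunoreciprocalbound}, \eqref{eq:fstructalpha>-1bound} and \eqref{eq:Msumfacgrowth} to show these series converge, define functions continuous on $[-1,1]\setminus\{0\}$, and satisfy \eqref{eq:structoriginnidbound} and \eqref{eq:structoriginnidasymp}. The term $\widetilde f_0$ will require separate handling: under the inversion it becomes an object of degree $-1$, hence not locally integrable at $0$, and it can only be carried to a distributional derivative, not to a function; I would therefore pass through a primitive---iterating the substitution on a sufficiently high ordinary primitive of $\widetilde f_0$ and integrating by parts, as in the proof of Lemma~\ref{l:faadibrunoreciprocal}, until all resulting integrals converge absolutely---to obtain a continuous $F$ on $[-1,1]\setminus\{0\}$, locally integrable at $0$, whose distributional derivative is what the inversion produces from $\widetilde f_0$. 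The behavior of $F$ at $0$ is controlled by that of $\widetilde f_0$ at $\pm\infty$: by \eqref{eq:strucalpha>-1constants} the constants $\widetilde c_0^{\pm}$ are tied to the two-sided coefficients of the homogeneous limit, which for the odd distribution $\beta x$ satisfy $c_+=-c_-$, so the leading singular parts of $F$ at $0$ coincide on the two sides, and the integration-against-$L(t)/t$ computation used for \eqref{eq:asymptotically homogeneous in quasi} turns the asymptotics of $\widetilde f_0$ into the De Haan relation \eqref{eq:structoriginnidprimitive}. Assembling everything gives $f=F'+\sum_{m\geq1}f_m^{(m)}$ on $(-1,1)\setminus\{0\}$, and to extend across $0$ I would observe that the difference is an ultradistribution supported at $\{0\}$; arguing as at the end of the proof of Theorem~\ref{t:structoriginalpha>-1}, now with $\alpha=-1$ (test against monomials $x^{n}$ near $0$), it must equal $c\,\delta$ for a constant $c$, all higher derivatives of $\delta$ being excluded, and writing $c\,\delta=(cH)'$ and replacing $F$ by $F+cH$ absorbs it into the $F'$-term, only shifting the constant $c_1^{\ast}$ in \eqref{eq:structoriginnidprimitive} (with $c=0$ whenever $L(\varepsilon)\to0$). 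Finally \eqref{eq:constantskzero} will already have come out of the sufficiency part.

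The step I expect to be the main obstacle---what goes beyond Theorem~\ref{t:structoriginalpha>-1}, where the residual $\{0\}$-supported term was forced to vanish---is the treatment of $\widetilde f_0$: realizing it as a continuous primitive $F$ with the correct two-sided De Haan behavior \eqref{eq:structoriginnidprimitive}, and checking that the unavoidable $\delta$-ambiguity in the extension from $\reals\setminus\{0\}$ to $\reals$ is consistent with and absorbable into that primitive. Carrying the constants $c_m^{\pm}$, $c_1^{\ast}$, $c_2^{\ast}$ accurately through the inversion and the primitive so as to land exactly on \eqref{eq:constantskzero} is the other piece of bookkeeping that will need care.
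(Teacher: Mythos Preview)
Your sufficiency argument matches the paper's: split off $F$, apply Theorem~\ref{t:structoriginalpha>-1} to the series $\sum_{m\ge k} f_m^{(m-1)}$ to get degree-$0$ quasiasymptotics, handle $F^{(k)}$ via its De Haan property (the paper simply invokes \cite[Theorem~5.3]{StructThQADistrOrigin} here), and differentiate.

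For necessity your route diverges from the paper's and is more laborious, though the idea is sound. The paper does \emph{not} redo the inversion $x\leftrightarrow 1/x$. Instead it observes that $xf(x)$ has quasiasymptotics of degree $0$ at the origin and applies Theorem~\ref{t:structoriginalpha>-1} to $xf(x)$ as a black box, obtaining $xf(x)=\sum_{m\ge 0}g_m^{(m)}$ on $(-1,1)$. It then mimics the division-by-$x$ algebra from the proof of Theorem~\ref{t:fstructnid} (setting $f_j(x)=\frac{x^{j-1}}{j!}\sum_{m\ge j} m!\,g_m(x)x^{-m}$) to get $f=f_0+\sum_{m\ge 1}f_m^{(m)}$ on $(-1,1)\setminus\{0\}$ with \eqref{eq:structoriginnidasymp} and \eqref{eq:structoriginnidbound}. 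Crucially, the paper then defines $g_0:=f-\sum_{m\ge 1}f_m^{(m)}$, which is already an ultradistribution on $(-1,1)$ with quasiasymptotics of degree $-1$ (by one more appeal to Theorem~\ref{t:structoriginalpha>-1}), and takes $F$ to be a distributional primitive of $g_0$. This completely sidesteps your extension-across-$\{0\}$ discussion: no residual $\{0\}$-supported term ever appears, because $F'$ is by construction the full residual ultradistribution, not just its restriction to $(-1,1)\setminus\{0\}$. The De Haan relation \eqref{eq:structoriginnidprimitive} then drops out of the pointwise asymptotics $f_0(x)\sim c_0^{\pm}x^{-1}L(|x|)$.

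There is also a genuine imprecision in your plan that would need fixing. It is not only $\widetilde f_0$ that produces a degree-$(-1)$ piece after inversion: \emph{every} $\widetilde f_j$ contributes an order-$0$ term via the coefficient $c_{j,0}$ in Lemma~\ref{l:faadibrunoreciprocal}, and all of these together make up the function $f_0(x)=\sum_{j\ge 0}(-1)^j c_{j,0}\,\widetilde f_j(1/x)\,x^{j}$ that has to be realized as $F'$. Your scheme of taking a primitive of $\widetilde f_0$ alone would miss the $j\ge 1$ contributions and leave you with an identity only modulo a term that is not $\{0\}$-supported. The clean fix is exactly what the paper does: once you have the $f_m$ for $m\ge 1$, set $F$ to be a primitive of the honest ultradistribution $f-\sum_{m\ge 1}f_m^{(m)}$ on $(-1,1)$, whose quasiasymptotics you already control.
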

	
	\begin{proof}	
For the sufficiency, applying Theorem \ref{t:structoriginalpha>-1} to the series $\sum_{m=k}^{\infty}f^{(m-1)}_{m}$, one deduces
$
f(\varepsilon x)- F^{(k)}(\varepsilon x)\sim (\gamma- c^{\ast}_{1}) \delta^{(k-1)}(\varepsilon x)$  in  $\uddspacereal{\beurou}$ as $\varepsilon\to0^{+}  
$.
In view of \cite[Theorem 5.3]{StructThQADistrOrigin} (see also \cite[Theorem 2.33, p. 149]{stevan2011asymptotic}), we have
$
F^{(k)}(\varepsilon x)\sim L(\varepsilon)( c^{\ast}_{1} \delta^{(k-1)}(\varepsilon x) + \beta (\varepsilon x)^{-k})$ in  $\mathcal{D}'$ as $\varepsilon\to0^{+},$
which yields the result.

For the necessity, we may assume that $k=1$. We now apply Theorem \ref{t:structoriginalpha>-1} to $xf(x)$. Using the same reasoning as in the proof of Theorem \ref{t:fstructnid}, one can write $f(x)=f_{0}+\sum_{m=1}^{\infty}f_{m}^{(m)}$ on $(-1,1)\setminus\{0\}$, with continuous functions $f_0, f_1,\dots$ on $[-1,1]\setminus\{0\}$ such that the limits \eqref{eq:structoriginnidasymp} exist including the case $m=0$. Applying again Theorem \ref{t:structoriginalpha>-1} to the series $\sum_{m=1}^{\infty}f_{m}^{(m-1)}$, we deduce that $f_{0}$ has an extension $g_0$ to $\mathbb{R}$ with quasiasymptotic behavior of order $-1$ with respect to $L(\varepsilon)$. Let $F$ be a first order primitive of $g_0$. Due to the fact that $F'=f_0$ off the origin and the quasiasymptotic behavior of $F'$, it is clear that $F$ is integrable at the origin and that it must have the form
\[F(x)=
- H(x) \left(\int^{1}_{x} f_{0}(t)dt + C_{+}\right) + H(-x)\left(\int_{-1}^{x} f_{0}(t)dt + C_{-}\right) . 
\]
Similarly as in the proof of Theorem \ref{t:fstructnid}, we conclude that 
$$
c_1^{\ast}= \lim_{x\to 0^{+}} \frac{F(x)-F(-x)}{L(x)}
$$
must exist
by comparing with the quasiasymptotics of $g_{0}$. Hence, for each $a>0$
\begin{equation*}
\lim_{x \rightarrow 0+} \frac{F(ax) -F(-x)}{L(x)} = c_{1}^{\ast}+ \lim_{x \rightarrow 0+} \frac{1}{L(x)}\int_{x}^{ax}f_{0}(t)dt
=c_{1}^{\ast}+c_{0}^{+}\log a.
\end{equation*}
	\end{proof}
Our method also yields:

\begin{theorem}
		\label{t:extensionzeroalpha}
Suppose that $f_{0} \in \uddspacereal{\beurou}(0,\infty)$ has quasiasymptotic behavior 
\[
f_0(\varepsilon x)\sim c \varepsilon^{\alpha} L(\varepsilon)x^{\alpha} \quad \mbox{in } \uddspacereal{\beurou} (0,\infty) \mbox{ as } \varepsilon\to0^{+}.
\]
Then $f_0$ admits extensions to $\mathbb{R}$. Let $f \in \uddspacereal{\beurou}$ be any of such extensions with support in $[0,\infty)$. Then:
			\begin{itemize}
				
				\item [(I)] If $\alpha \notin \mathbb{Z}_{-}$, then there is $g\in  \uddspacereal{\beurou}$ with $\operatorname*{supp} g\subseteq \{0\}$ such that 

					\[ f(  \varepsilon x) - g(\varepsilon x) \sim c  \varepsilon^{\alpha} L( \varepsilon)x_{+}^{\alpha} \quad \mbox{ in } \uddspacereal{\beurou}  \mbox{ as }  \varepsilon\to0^{+}. \]
	
				\item [(II)] If $\alpha=-k\in\mathbb{Z}_{-}$, then there are a function $b$ satisfying \eqref{eq:asshomogeneous} as $x\to0^{+}$ for each $a>0$
		and an ultradistribution $g\in  \uddspacereal{\beurou}$ with $\operatorname*{supp} g\subseteq \{0\}$ such that 
			\begin{equation*}
			 	f( \varepsilon x) = c \frac{L( \varepsilon)}{ \varepsilon^{k}} \operatorname*{Pf}\left(\frac{H(x)}{x^{k}}\right) + \frac{b( \varepsilon)}{ \varepsilon^{k}} \delta^{(k - 1)}(x) + g(\varepsilon x) + \littleoh{\frac{L( \varepsilon)}{ \varepsilon^{k}}} \quad\mbox{in } \uddspacereal{\beurou} \:\mbox{as }  \varepsilon \rightarrow 0^{+}.
			\end{equation*}
			\end{itemize}
	\end{theorem}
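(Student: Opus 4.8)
The plan is to imitate the strategy already used for quasiasymptotics at infinity in Section~\ref{subsec:extension}, transporting it to the origin by exactly the change of variables $x \leftrightarrow 1/x$ that underlies Lemma~\ref{l:faadibrunoreciprocal} and Theorem~\ref{t:structoriginalpha>-1}. First I would note that the \emph{existence} of an extension of $f_0$ to $\mathbb{R}$ is automatic once we know (from Lemma~\ref{l:qaprimitiveinfinity}(i) or (ii), applied to the quasiasymptotic behavior of $f_0$ on $(0,\infty)$) that $f_0$, up to adding a polynomial to a suitable primitive, is locally the derivative of finitely many orders of an $L^1_{\mathrm{loc}}$-function near the origin; more directly, the structural description already contained in Theorem~\ref{t:structoriginalpha>-1} / Theorem~\ref{t:structoriginnid}, read only on $(0,1)$, exhibits $f_0$ on $(0,1)$ as a convergent series of derivatives of functions in $L^1(0,1)$, and each summand $f_m$ can be extended to $[-1,1]$ (say, by $0$ on $[-1,0]$, or by a smooth truncation) without destroying the bounds \eqref{eq:structoriginalpha>-1bound}; the resulting series then defines an ultradistribution on $(-1,1)$, and gluing with any ultradistribution on $\mathbb{R}\setminus\{0\}$ extending $f_0$ there yields a global extension $f$. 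Restricting attention to extensions supported in $[0,\infty)$ is then just a matter of multiplying by the Heaviside function, which is legitimate since $H$ is $C^\infty$ away from $0$ and we may arrange supports in $(0,1)$.

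Next, for part~(I), I would fix such an $f$ with $\operatorname{supp} f \subseteq [0,\infty)$, pass to $\widetilde{f}(x) := f(1/x)$, which is an element of $\uddspacereal{\beurou}$ supported in $(0,\infty)$ and has quasiasymptotic behavior $\widetilde{f}(\lambda x) \sim c\,\lambda^{-\alpha}\widetilde{L}(\lambda)\,x_+^{\alpha}$ in $\uddspacereal{\beurou}(0,\infty)$ as $\lambda \to \infty$ (the computation is the one already done inside the proof of Theorem~\ref{t:structoriginalpha>-1}). Now I apply Theorem~\ref{t:extensioninftyalpha}(i) or (ii), according to whether $-\alpha > -1$ or $-\alpha < -1$, to conclude that $\widetilde{f}$ minus a finite linear combination $\sum_n a_n \delta^{(n)}(x)/\lambda^{n+1}$ of (dilated) derivatives of $\delta$ has quasiasymptotic behavior $c\,\lambda^{-\alpha}\widetilde{L}(\lambda)\,x_+^{\alpha}$ in all of $\uddspacereal{\beurou}$. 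Transporting back via $x \leftrightarrow 1/x$ and using Lemma~\ref{l:faadibrunoreciprocal} to see how $\delta^{(n)}$ transforms — it stays an ultradistribution supported at $\{0\}$, since the substitution is a diffeomorphism of $\mathbb{R}\setminus\{0\}$ fixing no finite point but sending $\pm\infty$ to $0$ — one gets that $f$ minus some $g$ with $\operatorname{supp} g \subseteq \{0\}$ has the asserted quasiasymptotic behavior at the origin; the case $\alpha\in\mathbb{Z}_+$ is absorbed by first reducing to $\alpha > -1$ via Lemma~\ref{l:qaprimitiveinfinity}(i) exactly as in Theorem~\ref{t:structoriginalpha>-1}. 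For part~(II) with $\alpha = -k$, after the reduction to $k=1$ via Lemma~\ref{l:qaprimitiveinfinity}(ii) I would run the argument of the proof of Theorem~\ref{t:fstructnid}/Theorem~\ref{t:extensioninftyalpha}(iii): write $\widetilde{f} = \gamma\widetilde{L}(\lambda)\delta + \widetilde{f}_0$-type decomposition at infinity, with $\widetilde{f}_0$ a genuine continuous function satisfying $\widetilde{f}_0(x) \sim c\,\widetilde{L}(x)/x$, form its primitive $\widetilde{b}$, obtain the $\operatorname{Pf}(H(x)/x)$ term and the associate-homogeneous $\widetilde{b}(\lambda)/\lambda$ term, and finally pull everything back through $x \leftrightarrow 1/x$; the associate homogeneity \eqref{eq:asshomogeneous} at $\infty$ becomes the corresponding relation at $0^+$ for $b(x) := \widetilde{b}(1/x)$, and $\operatorname{Pf}(H(x)/x^k)$ is invariant in form under this substitution up to terms supported at $\{0\}$, which are harmlessly absorbed into $g$.

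The main obstacle I expect is bookkeeping the ``junk at the origin'' correctly: every time one inverts a dilation-type asymptotic relation and pushes it through the singular diffeomorphism $x \mapsto 1/x$, one accumulates ultradistributions supported at $\{0\}$ (from $\operatorname{Pf}$-normalizations, from the truncations used to build the global extension $f$, and from the $\delta^{(n)}$-terms supplied by Theorem~\ref{t:extensioninftyalpha}), and one must check (a) that these remain of class $\beurou$ — which is where Lemma~\ref{l:Msumfacgrowth} and the growth bound $|c_{m,j}| \le (m!/j!)4^m$ of Lemma~\ref{l:faadibrunoreciprocal} do the work, exactly as in the proof of Theorem~\ref{t:structoriginalpha>-1} — and (b) that the ones which cannot be absorbed into $g$ (namely the $\operatorname{Pf}$ and $\delta^{(k-1)}$ terms in (II)) are precisely matched to the claimed normal form rather than hidden inside the $o(L(\varepsilon)/\varepsilon^k)$ error. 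A secondary subtlety is the precise justification that the extension exists at all and can be taken supported in $[0,\infty)$: here one invokes that the structural series on $(0,1)$ converges in $\uddspacereal{\beurou}(-1,1)$ after extending each $f_m$ by a $\beurou$-truncation to a one-sided function on $[-1,1]$, then combines with an arbitrary extension of $f_0|_{\mathbb{R}\setminus\{0\}}$ using a partition of unity; the difference of any two extensions is supported at $\{0\}$, which is exactly the freedom encoded by $g$ in the statement, so no loss of generality results from fixing one. Once these points are handled, the rest is the routine translation dictionary between the behaviors at $\infty$ (already established in Theorem~\ref{t:extensioninftyalpha}) and at $0^+$.
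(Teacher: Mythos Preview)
Your first paragraph already contains the paper's intended argument, but you do not push it far enough; instead you switch in the second paragraph to a route through Theorem~\ref{t:extensioninftyalpha} that has a genuine gap.

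The problem with the transport-back step is this: the map $x\leftrightarrow 1/x$ is a diffeomorphism of $(0,\infty)$ onto itself, not of $\reals$ onto $\reals$. Once you have (after localizing $f$ near $0$ so that $\widetilde f$ is well defined on $\reals$) the quasiasymptotic behavior of $\widetilde f(\lambda y)$ in $\uddspacereal{\beurou}$ from Theorem~\ref{t:extensioninftyalpha}, you can only pair it with $\phi\in\udspacereal{\beurou}$ of the form $\phi(y)=y^{-2}\psi(1/y)$ for $\psi\in\udspace{\beurou}{(0,\infty)}$; a test function $\psi\in\udspacereal{\beurou}$ with $0\in\supp\psi$ produces a $\phi$ which is not compactly supported. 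So the conclusion you recover for $f(\varepsilon x)$ is only in $\uddspacereal{\beurou}(0,\infty)$, which is the hypothesis. Relatedly, your claim that the $\delta^{(n)}$ terms from Theorem~\ref{t:extensioninftyalpha} ``stay supported at $\{0\}$'' under the substitution is wrong: in the $\widetilde f$-picture $y=0$ corresponds to $x=\infty$, not $x=0$, so these terms do not become part of the $g$ in the statement.

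The fix is to use what you already wrote in your first paragraph not merely for \emph{existence} of an extension but as the full proof of (I) and (II). Run the necessity half of Theorem~\ref{t:structoriginalpha>-1} (resp.\ Theorem~\ref{t:structoriginnid}) one-sidedly on $(0,1)$ to obtain $f_0=\sum_{m\ge k} f_m^{(m)}$ (resp.\ $F^{(k)}+\sum f_m^{(m)}$) there, extend each $f_m$ by $0$ to $[-1,1]$, and observe that the resulting series $\hat f$ converges in $\uddspacereal{\beurou}(-1,1)$ and, by the \emph{sufficiency} halves of those same theorems, has precisely the quasiasymptotic $c\,\varepsilon^\alpha L(\varepsilon)x_+^\alpha$ (resp.\ the $\operatorname{Pf}$ plus $b(\varepsilon)\varepsilon^{-k}\delta^{(k-1)}$ form). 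Then $g:=f-\hat f$ is supported at $\{0\}$, and since any two extensions of $f_0$ supported in $[0,\infty)$ differ by such a $g$, this handles every extension at once. This is exactly what the paper means by ``our method also yields''; Theorem~\ref{t:extensioninftyalpha} plays no role.
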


\section{Quasiasymptotic behavior in $\mathcal{Z}^{\prime\beurou}$}
\label{quasiasymptotics Gelfand-Shilov}
As an application of our structural theorems, we now discuss some other extension results for quasiasymptotics of ultradistributions. For distributions, the connection between tempered distributions and the quasiasymptotic behavior has been extensively studied \cite{stevan2011asymptotic, P-V2014,StructThQADistrInf,StructThQADistrOrigin,Zavyalov89}. The following properties are well known:

\begin{enumerate}
\item[1.] If $f\in\mathcal{D}'$ has quasiasymptotic behavior at infinity, then $f\in \mathcal{S}'$ and it has the same quasiasymptotic behavior in $\mathcal{S}'$.
\item [2.]If $f\in\mathcal{S}'$ has quasiasymptotic behavior at the origin in $\mathcal{D}'$, then it has the same quasiasymptotic behavior in $\mathcal{S}'$.
\end{enumerate}

Our goal is to obtain ultradistributional analogs of these results. For this, we introduce new ultradistibution spaces $\mathcal{Z}^{\prime\beurou}$, somewhat reminiscent of the Gelfand-Shilov type spaces \cite{carmichael2007boundary} and at the same time generalizing $\mathcal{S}'$. They are defined as follows. For any $n \in \naturals$ and $h > 0$, $\mathcal{Z}^{M_{p}, h}_{n}$ denotes the Banach space of all $\varphi \in C^{\infty}$ for which the norm
	\[ \norm{\varphi}_{M_{p}, n, h} := \sup_{x \in \reals, m \in \naturals} \frac{(1 + |x|)^{n + m} |\varphi^{(m)}(x)|}{h^{m} M_{m}} \]
is finite. Then we consider the following locally convex spaces
	\[ \mathcal{Z}^{(M_{p})}_{n} = \varprojlim_{h \rightarrow 0^{+}} \mathcal{Z}^{M_{p}, h}_{n}, \qquad \mathcal{Z}^{\{M_{p}\}}_{n} = \varinjlim_{h \rightarrow \infty} \mathcal{Z}^{M_{p}, h}_{n} , \]
corresponding to the Beurling and Roumieu case, where we use $\mathcal{Z}^{\beurou}_{n}$ as a common notation for these two cases, and finally we define
	\[ \mathcal{Z}^{\beurou} = \varprojlim_{n \in \naturals} \mathcal{Z}^{\beurou}_{n} . \]
The aim of this section is to show that quasiasymptotic behavior in $\mathcal{D}^{\prime\beurou}$ naturally extends to quasiasymptotic behavior in $\mathcal{Z}^{\prime\beurou}$. Let us first consider the case at infinity.

\begin{theorem}
	If $f \in \mathcal{D}^{\prime\beurou}$ has quasiasymptotic behavior with respect to $\lambda^{\alpha} L(\lambda)$, with $L$ slowly varying at infinity and $\alpha \in \reals$, then $f \in \mathcal{Z}^{\prime\beurou}$ and it has the same quasiasymptotic behavior in $\mathcal{Z}^{\prime\beurou}$.
\end{theorem}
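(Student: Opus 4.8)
The plan is to deduce the result from the structural theorems of Section~\ref{Section structure at infinity}. By Lemma~\ref{l:qaprimitiveinfinity} (together with the fact that taking primitives, adding polynomials of controlled degree, and differentiating behave well with respect to the quasiasymptotic degree and are continuous operations on both $\uddspacereal{\beurou}$ and $\mathcal{Z}^{\prime\beurou}$) it suffices to treat the two cases $\alpha>-1$ and $\alpha=-1$. In the first case Theorem~\ref{t:structalpha>-1} (with $k=0$), in the second Theorem~\ref{t:fstructnid} (with $k=1$), yields continuous functions $f_{m}$ on $\reals$ with $f=\sum_{m\geq k}f_{m}^{(m)}$ and $|f_{m}(x)|\leq C\ell^{m}M_{m}^{-1}(1+|x|)^{\alpha+m}L(|x|)$ for a suitable $\ell>0$ (any $\ell>0$ in the Roumieu case). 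The whole idea is that this is exactly the type of representation that pairs continuously against the polynomially decaying test functions of $\mathcal{Z}^{\beurou}$.

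\emph{Step 1: $f\in\mathcal{Z}^{\prime\beurou}$.} Given $\varphi\in\mathcal{Z}^{\beurou}_{n}$ we have $\varphi\in\mathcal{Z}^{M_{p},h}_{n}$ for some $h>0$, hence $|\varphi^{(m)}(x)|\leq\norm{\varphi}_{M_{p},n,h}\,h^{m}M_{m}(1+|x|)^{-n-m}$. Picking $\gamma>0$ with $\alpha-\gamma>-1$ and then $n>\alpha+\gamma+1$ (splitting $\int_{\reals}(1+|x|)^{\alpha-n}L(|x|)\,dx$ at $|x|=1$ and using elementary properties of slowly varying functions), the pairing $\langle f_{m}^{(m)},\varphi\rangle=(-1)^{m}\int_{\reals}f_{m}\varphi^{(m)}$ is absolutely convergent with $|\langle f_{m}^{(m)},\varphi\rangle|\leq C'\norm{\varphi}_{M_{p},n,h}(\ell h)^{m}$, the $M_{m}$ cancelling. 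Shrinking $h$ (Beurling) or $\ell$ (Roumieu) so that $\ell h<1$, the series $\langle f,\varphi\rangle:=\sum_{m\geq k}\langle f_{m}^{(m)},\varphi\rangle$ converges and defines a continuous functional on $\mathcal{Z}^{\beurou}$ restricting to $f$ on $\udspacereal{\beurou}$ (which is dense in $\mathcal{Z}^{\beurou}$ by non-quasianalyticity); thus $f\in\mathcal{Z}^{\prime\beurou}$. The dilations acting continuously on $\mathcal{Z}^{\beurou}$ (immediate from the definition of the norms), the quasiasymptotic behavior in $\mathcal{Z}^{\prime\beurou}$ is meaningful.

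\emph{Step 2: the limit when $\alpha>-1$.} For $\varphi\in\mathcal{Z}^{\beurou}$ a change of variables gives, for $\lambda\geq1$, $\langle f(\lambda x)/(\lambda^{\alpha}L(\lambda)),\varphi(x)\rangle=\sum_{m\geq0}(-1)^{m}\int_{\reals}\lambda^{-\alpha-m}L(\lambda)^{-1}f_{m}(\lambda x)\varphi^{(m)}(x)\,dx$. This is precisely the sum estimated at the end of the proof of Theorem~\ref{t:structalpha>-1}; the only modification is that the factor $R$ that arose there from $\supp\varphi\subseteq[-R,R]$ is now produced by splitting each integral at $|x|=1/\lambda$ and $|x|=1$ and using the decay $|\varphi^{(m)}(x)|\leq\norm{\varphi}_{M_{p},n,h}\,h^{m}M_{m}(1+|x|)^{-n-m}$ together with Potter's estimate \cite[Theorem~1.5.4]{bingham1989regular} for $L$; this bounds the $m$-th term by $C''\norm{\varphi}_{M_{p},n,h}(2\ell h)^{m}$ uniformly in $\lambda\geq\lambda_{0}$, which is summable for $\ell h$ small. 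The same double application of the dominated convergence theorem as in that proof (using the uniform convergence theorem for slowly varying functions for the inner limit $\lambda^{-\alpha-m}L(\lambda)^{-1}f_{m}(\lambda x)\to c_{m}^{\pm}|x|^{\alpha+m}$ on compacts of $\reals\setminus\{0\}$) yields $\langle f(\lambda x)/(\lambda^{\alpha}L(\lambda)),\varphi\rangle\to c_{-}\int_{-\infty}^{0}|x|^{\alpha}\varphi+c_{+}\int_{0}^{\infty}x^{\alpha}\varphi$ with $c_{\pm}$ as in \eqref{eq:strucalpha>-1constants}; since this agrees with the already known limit on $\udspacereal{\beurou}$, the limit ultradistribution in $\mathcal{Z}^{\prime\beurou}$ is the same $g$.

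\emph{Step 3: the case $\alpha=-1$.} This is the only point requiring extra care, since the structure provided by Theorem~\ref{t:fstructnid} also carries the condition \eqref{eq:fstructnidasymp2} and the crude estimate of Step~1 does not by itself handle the lowest term $f_{0}$. I would write $f=f_{0}+\bigl(\sum_{m\geq1}f_{m}^{(m-1)}\bigr)'$: by Theorem~\ref{t:structalpha>-1} the series $\sum_{m\geq1}f_{m}^{(m-1)}$ has quasiasymptotic behavior of degree $0$ with respect to $L$, so by the argument of Step~2 it extends to $\mathcal{Z}^{\prime\beurou}$ with the same behavior there, and differentiation, being continuous on $\mathcal{Z}^{\prime\beurou}$, preserves this. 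The remaining term $f_{0}$ is a single continuous function with $|f_{0}(x)|\leq C(1+|x|)^{-1}L(|x|)$ (hence $f_{0}\in\mathcal{Z}^{\prime\beurou}$) and $\int_{-x}^{x}f_{0}(t)\,dt\sim c_{0}^{\ast}L(x)$, and the computation leading to \eqref{eq:asymptotically homogeneous in quasi} in the proof of Theorem~\ref{t:fstructnid} goes through verbatim with $\varphi\in\udspacereal{\beurou}$ replaced by $\varphi\in\mathcal{Z}^{\beurou}$, the only properties of $\varphi$ used being smoothness and rapid decay of $\varphi$ and $\varphi'$. Adding the two contributions gives $f(\lambda x)\sim L(\lambda)\lambda^{-1}(\gamma\delta(x)+\beta x^{-1})$ in $\mathcal{Z}^{\prime\beurou}$, with $\gamma,\beta$ as before, which together with the reduction via Lemma~\ref{l:qaprimitiveinfinity} finishes the proof.
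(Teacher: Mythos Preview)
Your argument follows the same route as the paper's proof: use the structural theorems to obtain $f=\sum_{m\geq k} f_m^{(m)}$ with the bounds \eqref{eq:fstructalpha>-1bound} or \eqref{eq:fstructnidbound}, show $f\in\mathcal{Z}^{\prime\beurou}$ by direct estimation of $\sum_m\int |f_m||\varphi^{(m)}|$, treat $\alpha\notin\mathbb{Z}_{-}$ by repeating the dominated-convergence computation from the sufficiency part of Theorem~\ref{t:structalpha>-1}, and for $\alpha=-1$ split off $f_0$ from the series $\sum_{m\geq 1}f_m^{(m)}$.

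The one place where your write-up is looser than the paper is the handling of $f_0$ in Step~3. The computation leading to \eqref{eq:asymptotically homogeneous in quasi} in the proof of Theorem~\ref{t:fstructnid} does not involve any test function $\varphi$; it establishes an asymptotic expansion of $F(\lambda x)$ that is uniform only for $x$ in \emph{compact} intervals, and it is this compact uniformity that yields convergence in $\uddspacereal{\beurou}$. That does not by itself give convergence against a non-compactly supported $\varphi\in\mathcal{Z}^{\beurou}$, so the phrase ``goes through verbatim'' is not accurate. Your remark that rapid decay of $\varphi$ and $\varphi'$ should take care of the tails is correct in spirit, but a genuine tail estimate (splitting at $|x|=R$ and using Potter's inequality, as you do elsewhere) would still have to be written out. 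The paper avoids this altogether by noting that $f_0$ is an ordinary distribution whose quasiasymptotic behavior in $\mathcal{D}'$ is already known to extend to $\mathcal{S}'$ by \cite[Remark~3.1]{StructThQADistrInf} (see also \cite[Theorem~2.41, p.~158]{stevan2011asymptotic}), and hence to $\mathcal{Z}^{\prime\beurou}\supset\mathcal{S}'$; this is the quickest way to close the argument.
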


\begin{proof}
Let $k \in \naturals$ be the smallest natural number such that $-(k + 1) \leq \alpha$. Then by either Theorem \ref{t:structalpha>-1} or Theorem \ref{t:fstructnid} we find some $\ell > 0$ (for any $\ell > 0$) and a $C=C_{\ell} > 0$ such that \eqref{eq:fstructalpha>-1}
	and 	\eqref{eq:fstructalpha>-1bound} hold. Wet set $n=\lceil \alpha + 1 \rceil$. Employing Potter's estimate \eqref{eq:slowlyvaryingsubpolynomialbound} (with $\gamma=\lambda=1$), we find that for any $\varphi \in \mathcal{Z}^{\beurou}$ and any $m \geq k$ we have
			\[ \left| \int_{- \infty}^{\infty} f_{m}(x) \varphi^{(m)}(x) dx \right| \leq C \frac{\ell^{m}}{M_{m}} \int_{\reals} (1 + |x|)^{ m +n} | \varphi^{(m)}(x) | dx \leq C' \|\varphi\|_{M_p,n + 2,h} (h \ell^{m}) , \]
		and as $h \ell$ may be chosen freely, it follows that this is absolutely summable over $m \geq k$. Consequently, $f=\sum_{m=k}^{\infty} f_m^{(m)} \in \mathcal{Z}^{\prime\beurou}$.

		For the quasiasymptotic behavior of $f$, the case where $\alpha$ is not a negative integer can be shown in a similar fashion as the sufficiency proof of Theorem \ref{t:structalpha>-1}. For $\alpha = - k \in \integers_{-}$, it is clear that we only need to treat the case $k = 1$, as the general case then automatically follows by differentiating. By Theorem \ref{t:fstructnid}, there exist continuous functions $f_{m}$, $m \in \naturals$, satisfying (\ref{eq:fstructnidasymp}),  (\ref{eq:fstructnidasymp2}), and (\ref{eq:fstructnidbound}) such that
			$f = f_{0} + \sum_{m = 1}^{\infty} f^{(m)}_{m} . $
		The infinite sum in the previous identity clearly has a primitive with quasiasymptotic behavior with respect to $L(\lambda)$, so that its quasiasymptotic behavior may be extended to the whole of $\mathcal{Z}^{\prime\beurou}$, and in turn its derivative  $\sum_{m = 1}^{\infty} f^{(m)}_{m}$ has quasiasymptotic behavior with respect to $\lambda^{-1} L(\lambda)$ in $\mathcal{Z}^{\prime\beurou}$. By (\ref{eq:fstructnidasymp}) and \eqref{eq:fstructnidasymp2}, $f_{0}$ has quasiasymptotic behavior with respect $\lambda^{-1} L(\lambda)$  in ${\mathcal{D}}'$, hence, by \cite[Remark 3.1]{StructThQADistrInf} (see also \cite[Theorem 2.41, p. 158]{stevan2011asymptotic}), it has the same quasiasymptotic behavior in ${\sss}'$, hence certainly also in $\mathcal{Z}^{\prime\beurou}$. Therefore, the same also holds for $f$.
\end{proof}

Let us now turn our attention to the case at the origin. The next lemma proves that the quasiasymptotic at the origin in $\mathcal{Z}^{\prime\ast}$ is a local property.
\begin{lemma}
	\label{p:quasiasymporiginlocalZ}
	Let $L$ be a slowly varying function at the origin and $\alpha \in \reals$. Suppose $f_{1}, f_{2} \in \mathcal{Z}^{\prime\beurou}$ are such that for some $a > 0$, $f_{1}$ and $f_{2}$ coincide on $\reals \setminus [-a, a]$. Suppose that $f_{1}(\varepsilon x) \sim \varepsilon^{\alpha} L(\varepsilon) g(x)$ in $\mathcal{Z}^{\prime\beurou}$ as $\varepsilon \rightarrow 0^{+}$, then, also $f_{2}(\varepsilon x) \sim \varepsilon^{\alpha} L(\varepsilon) g(x)$ in $\mathcal{Z}^{\prime\beurou}$.
\end{lemma}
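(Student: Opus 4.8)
The plan is to show that the difference $h := f_{1} - f_{2}$ has vanishing quasiasymptotic behavior at the origin in $\mathcal{Z}^{\prime\beurou}$, i.e.\ $h(\varepsilon x) = o(\varepsilon^{\alpha} L(\varepsilon))$ as $\varepsilon \to 0^{+}$; by linearity this is equivalent to the conclusion, since then $f_{2}(\varepsilon x) = f_{1}(\varepsilon x) - h(\varepsilon x) \sim \varepsilon^{\alpha} L(\varepsilon) g(x)$. The whole content of the lemma is that the quasiasymptotics at the origin remains a genuinely \emph{local} notion even when tested against the globally supported functions of $\mathcal{Z}^{\beurou}$. Concretely, as $f_{1}$ and $f_{2}$ agree off $[-a,a]$, the difference $h\in\mathcal{Z}^{\prime\beurou}$ vanishes away from the origin, and it is exactly this support condition combined with the fast polynomial decay built into $\mathcal{Z}^{\beurou}$ that will force the dilates of $h$ to be negligible for \emph{any} $\alpha\in\reals$.

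Fix $\phi \in \mathcal{Z}^{\beurou}$ and write $\ev{h(\varepsilon x)}{\phi(x)} = \varepsilon^{-1} \ev{h(y)}{\phi(y/\varepsilon)}$. First I would invoke the continuity of $h$ on $\mathcal{Z}^{\beurou}$ to produce $N \in \naturals$, $h_{0} > 0$, and $C > 0$ with $|\ev{h}{\Phi}| \leq C \norm{\Phi}_{M_{p}, N, h_{0}}$ for all $\Phi \in \mathcal{Z}^{\beurou}$ (in the Roumieu case this needs the usual reduction through the inductive steps, but the resulting single seminorm bound holds). Choosing a fixed cut-off $\theta \in \mathcal{E}^{\beurou}$ that equals $1$ on the support of $h$ and vanishes on a neighborhood of the origin, we have $h = \theta h$, so everything reduces to estimating $\norm{\theta(\cdot)\,\phi(\cdot/\varepsilon)}_{M_{p}, N, h_{0}}$ as $\varepsilon\to0^{+}$.

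The core estimate exploits that $\phi \in \mathcal{Z}^{\beurou}$ decays faster than any power: for every $n$ one has $|\phi^{(j)}(u)| \leq \norm{\phi}_{M_{p}, n, h'} \, h'^{\,j} M_{j} (1+|u|)^{-n-j}$. On the region $\{|y| \geq a/2\}$ carrying $\theta$ the argument $u = y/\varepsilon$ satisfies $1 + |y|/\varepsilon \geq |y|/\varepsilon$, so that $\varepsilon^{-j} |\phi^{(j)}(y/\varepsilon)| \leq \norm{\phi}_{M_{p}, n, h'}\, h'^{\,j} M_{j}\, \varepsilon^{n} |y|^{-n-j}$, that is, each differentiation produces a gain of $\varepsilon^{n}$. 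Applying Leibniz's rule to $\theta(y)\phi(y/\varepsilon)$ and splitting off the $k=0$ term (where $\theta$ is present on the unbounded region $\{|y|\ge a/2\}$, controlled by taking $n\ge N$ so the weight $(1+|y|)^{N+m}|y|^{-n-m}$ stays bounded) from the terms with $k\ge 1$ (where $\theta^{(k)}$ is supported in the bounded transition annulus, where $|y|$ is harmless), one obtains $\norm{\theta(\cdot)\,\phi(\cdot/\varepsilon)}_{M_{p}, N, h_{0}} \leq C_{n}\,\varepsilon^{n}$ for every $n \geq N$. Hence $\ev{h(\varepsilon x)}{\phi(x)} = O(\varepsilon^{\,n-1})$ for all $n$, and since $L$ is slowly varying this is $o(\varepsilon^{\alpha} L(\varepsilon))$ for any fixed $\alpha$, as $\varepsilon^{\,n-1-\alpha}/L(\varepsilon)\to 0$ once $n > \alpha + 1$.

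The main obstacle I anticipate is the uniform bookkeeping of the seminorm estimate across all derivative orders: the Leibniz expansion generates a sum over $k$ of products of $\theta^{(k)}$ with $\varepsilon^{-(m-k)}\phi^{(m-k)}(y/\varepsilon)$, and one must verify that the combined factors $M_{k} M_{m-k}$, the binomial coefficients, the auxiliary constants $h'$, and the weight $(1+|y|)^{N+m}$ all recombine into a bound of the form $C_{n}\,\varepsilon^{n}\, h_{0}^{m} M_{m}$ with a \emph{single} admissible $h_{0}$, uniformly over $m$, and simultaneously for the Beurling ($\varprojlim$) and Roumieu ($\varinjlim$) topologies. This is precisely the kind of recombination that the growth estimates of Lemma \ref{l:Msumfacgrowth} together with condition $(M.2)$ are designed to handle, so the difficulty here is technical rather than conceptual.
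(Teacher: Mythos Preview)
There is a genuine confusion here, and it is partly induced by what is evidently a typo in the statement itself. As written, the hypothesis that $f_1$ and $f_2$ coincide on $\reals\setminus[-a,a]$ forces the difference $h=f_1-f_2$ to be supported \emph{inside} $[-a,a]$, hence near the origin; but then the conclusion is false in general. For instance $h=\delta$ gives $h(\varepsilon x)=\varepsilon^{-1}\delta(x)$, which is not $o(\varepsilon^{\alpha}L(\varepsilon))$ for $\alpha>-1$. The paper's own proof makes clear that the intended hypothesis is the opposite one, namely that $f_1$ and $f_2$ coincide on a neighborhood $(-a,a)$ of the origin, so that $h$ vanishes \emph{near} $0$; the reduction is then to showing that any $f\in\mathcal{Z}^{\prime\ast}$ vanishing on a neighborhood of the origin satisfies $f(\varepsilon x)=o(\varepsilon^{N})$ in $\mathcal{Z}^{\prime\ast}$ for every $N$.

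Your first paragraph follows the mistyped version literally, asserting that $h$ ``vanishes away from the origin'' (i.e.\ $\supp h\subseteq[-a,a]$). But the argument you actually carry out in the second and third paragraphs is for the \emph{intended} hypothesis: you choose a cutoff $\theta$ equal to $1$ on $\supp h$ and vanishing near $0$---impossible if $0\in\supp h$---and you estimate on the region $\{|y|\geq a/2\}$. So the write-up is internally inconsistent. Once the first paragraph is corrected to match the intended hypothesis, the remainder is essentially the paper's argument, only with extra machinery. The paper dispenses with the cutoff-and-Leibniz step altogether: it uses the support condition directly to restrict the supremum in the continuity bound to $|x|\geq R$, then substitutes so that the supremum runs over $|x|\geq R/\varepsilon\to\infty$ and hence tends to zero by the built-in decay of $\varphi\in\mathcal{Z}^{\ast}$. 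Your anticipated Leibniz/$(M.2)$ recombination would go through, but it is unnecessary.
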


\begin{proof}
	We only show the Beurling case; the Roumieu case 	can be shown analogously by employing a projective description for $\mathcal{Z}^{\{M_{p}\}}$ obtained similarly as in  \cite{carmichael2007boundary}. It suffices to show that if $f \in \mathcal{Z}^{\prime(M_p)}$ vanishes near the origin, then $f(\varepsilon x) \sim \varepsilon^{N} \cdot 0$ for all $N \in \naturals$. Let $f$ be as described, then there exist $0 < R < 1$, $n \in \naturals$, $\ell, C > 0$ such that
		\[ \left| \ev{f(\varepsilon x)}{\phi(x)} \right| \leq C \sup_{|x| \geq R, m \in \naturals} \frac{|x|^{n + m} |\phi^{(m)}(x)|}{\ell^{m} M_{m}}, \qquad \phi \in \mathcal{Z}^{(M_{p})} . \]
	Taking $\phi(x) = \varepsilon^{-1} \varphi(x / \varepsilon)$ with $\varphi \in \mathcal{Z}^{(M_{p})}$ and arbitrary $0 < \varepsilon < 1$ we have for $N \geq n$
		\begin{align*}
			\varepsilon^{-N} \left|\ev{f(\varepsilon x)}{\varphi}\right| &\leq C \sup_{|x| \geq R, m \in \naturals} \frac{|x|^{n + m} |\varphi^{(m)}(x / \varepsilon)|}{\varepsilon^{N + m + 1} \ell^{m} M_{m}} \\
			&\leq CR^{-N+n-1} \sup_{|x| \geq R / \varepsilon, m \in \naturals} \frac{|x|^{N + m + 1} |\varphi^{(m)}(x)|}{\ell^{m} M_{m}} \rightarrow 0 , 
		\end{align*}
	as $\varepsilon \rightarrow 0^{+}$.
\end{proof}

\begin{theorem}
	Suppose $f \in \mathcal{Z}^{\prime\beurou}$ has quasiasymptotic behavior in $\mathcal{D}^{\prime\beurou}$ with respect to $\varepsilon^{\alpha} L(\varepsilon)$, with $L$ slowly varying at the origin and $\alpha \in \reals$, then $f$ has the same quasiasymptotic behavior in $\mathcal{Z}^{\prime\beurou}$. 
\end{theorem}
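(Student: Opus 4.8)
The plan is to reduce to the analysis at infinity already carried out, exactly paralleling the proof of the corresponding theorem at infinity, and then to exploit the locality provided by Lemma \ref{p:quasiasymporiginlocalZ}. Since quasiasymptotics at the origin is a local property, only the behavior of $f$ in a bounded neighborhood of the origin matters; thus I may freely modify $f$ outside $[-1,1]$ and assume it is compactly supported. First I would apply the appropriate structural theorem at the origin---Theorem \ref{t:structoriginalpha>-1} if $\alpha\notin\integers_{-}$ and Theorem \ref{t:structoriginnid} if $\alpha\in\integers_{-}$---to obtain the decomposition $f = \sum_{m\geq k} f_{m}^{(m)}$ on $(-1,1)$ (with an extra term $F^{(k)}$ in the negative integral case), where the $f_{m}$ are continuous on $[-1,1]\setminus\{0\}$, lie in $L^{1}(-1,1)$, satisfy the pointwise asymptotic relations \eqref{eq:structoriginalpha>-1limit} (resp.\ \eqref{eq:structoriginnidasymp}) at $0$, and obey the bounds \eqref{eq:structoriginalpha>-1bound} (resp.\ \eqref{eq:structoriginnidbound}).

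The core estimate is then to test against $\varphi\in\mathcal{Z}^{\beurou}$. Writing $\langle f(\varepsilon x)/(\varepsilon^{\alpha}L(\varepsilon)),\varphi(x)\rangle$ as an infinite sum $\sum_{m\geq k}(-1)^{m}\varepsilon^{-\alpha-m-1}L(\varepsilon)^{-1}\int f_{m}(x)\varphi^{(m)}(x/\varepsilon)\,dx$, I substitute $x\mapsto\varepsilon x$ and split the integral into the region $|x|\leq 1/\varepsilon$ (where the bound \eqref{eq:structoriginalpha>-1bound} applies after scaling, giving $|f_m(\varepsilon x)|\leq C(\ell^m/M_m)\varepsilon^{\alpha+m}|x|^{\alpha+m}L(\varepsilon|x|)$) and $|x|>1/\varepsilon$ (where $f$ is zero after our compact-support reduction, so this part vanishes identically). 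Using Potter's estimate \eqref{eq:slowlyvaryingsubpolynomialbound} for $\tilde L$ together with the $\mathcal{Z}^{\beurou}$-seminorm bound $|\varphi^{(m)}(x)|\leq C_{\varphi,h}h^{m}M_{m}(1+|x|)^{-n-m}$ and choosing $n=\lceil\alpha+1\rceil$ (so that the resulting $|x|$-integrals converge at infinity), one obtains a bound of the form $C(h\ell R_{\gamma})^{m}$ on the $m$-th term, which---since $h\ell$ is at our disposal (Beurling) or fixed but with $\ell$ arbitrary (Roumieu)---is absolutely summable in $m$. This legitimizes interchanging sum, limit, and integral, and reproduces the limit $\langle g,\varphi\rangle$ by dominated convergence exactly as in the sufficiency half of Theorem \ref{t:structoriginalpha>-1}.

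In the negative integral degree case I would handle the extra term $F^{(k)}$ separately: reducing to $k=1$ as usual, $F$ is a continuous primitive satisfying \eqref{eq:structoriginnidprimitive}, so $F^{(1)}=f_{0}$ has quasiasymptotic behavior of order $-1$ with respect to $L$ in $\mathcal{D}'$, and by the classical distributional transfer result \cite[Remark 3.1]{StructThQADistrInf} (see also \cite[Theorem 2.41, p. 158]{stevan2011asymptotic}) the same holds in $\mathcal{S}'$, hence a fortiori in $\mathcal{Z}^{\prime\beurou}$; the remaining series $\sum_{m\geq 1}f_{m}^{(m)}$ is treated as above (or via a primitive with order-$0$ quasiasymptotics, followed by differentiation). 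The Roumieu case requires, as in the proof of Lemma \ref{p:quasiasymporiginlocalZ}, a projective description of $\mathcal{Z}^{\{M_{p}\}}$ obtained along the lines of \cite{carmichael2007boundary} to make the interchange of inductive limit and the estimates rigorous.

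The main obstacle I anticipate is bookkeeping in the core estimate: one must verify that a single choice of $n$ (depending only on $\alpha$, not on $m$) simultaneously tames the growth of $(1+|x|)^{\alpha+m}$ against the decay $(1+|x|)^{-n-m}$ at infinity and keeps the contribution near $0$ from the $L^{1}$-singularity of $f_{m}$ under control, while the geometric factor coming from Potter's estimate and the test-function seminorm stays summable after absorbing the free parameter $h\ell$. Once that uniform-in-$m$ bound is in place the rest is a routine double application of dominated convergence, identical in spirit to the sufficiency arguments already given.
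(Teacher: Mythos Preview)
Your approach is essentially the same as the paper's: reduce to compact support via Lemma \ref{p:quasiasymporiginlocalZ}, invoke the structural theorem at the origin, and pass to the limit by dominated convergence using the bounds \eqref{eq:structoriginalpha>-1bound} together with Potter's estimate and the $\mathcal{Z}^{\beurou}$-seminorms. The only organizational difference is that the paper decomposes the test function as $\psi=\psi_{-}+\psi_{c}+\psi_{+}$ with $\psi_{c}\in\udspacereal{\beurou}$ and $\supp\psi_{\pm}\subseteq[\pm 1,\pm\infty)$; the compact piece is then handled directly by the $\uddspacereal{\beurou}$ hypothesis, and the term-by-term estimate is only needed for $\psi_{\pm}$, where the integration starts at $|x|=1$ and the near-origin behavior of $f_{m}$ is irrelevant. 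Your uniform-in-$m$ estimate over all of $\reals$ is equally valid and matches the sufficiency argument of Theorem \ref{t:structoriginalpha>-1}; the paper's splitting just shortens the bookkeeping.

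Two small points. First, the reference you cite for the classical transfer result in the negative-integer case, \cite[Remark 3.1]{StructThQADistrInf}, concerns quasiasymptotics at infinity; for the behavior at the origin you want the analogous statement from \cite{StructThQADistrOrigin} (the paper itself simply refers to \cite[Theorem 6.1]{StructThQADistrOrigin} for this case). Second, since the structural decomposition $f=\sum f_{m}^{(m)}$ is only asserted on $(-1,1)$, extending the $f_{m}$ by zero and working globally may introduce a discrepancy supported at $\{\pm 1\}$; this is harmless by another application of Lemma \ref{p:quasiasymporiginlocalZ}, but it should be said.
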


\begin{proof}
	By Lemma \ref{p:quasiasymporiginlocalZ} we may assume that $\supp f \subset [-1, 1]$. Suppose first that $\alpha \notin \integers_{-}$ and let $k \in \naturals$ be the smallest integer such that $-(k + 1) < \alpha$. From Theorem \ref{t:structoriginalpha>-1} we find continuous functions $f_{m}$ on $[-1, 1] \setminus \{0\}$, satisfying (\ref{eq:fstructalpha>-1}), (\ref{eq:structoriginalpha>-1limit}) and (\ref{eq:structoriginalpha>-1bound}). Take any $\psi \in \mathcal{Z}^{\prime\beurou}$ and decompose it as $\psi = \psi_{-} + \psi_{c} + \psi_{+}$ where $\supp \psi_{-} \subseteq (-\infty, -1]$, $\psi_{c}$ has compact support and $\supp \psi_{+} \subseteq [1, \infty)$. Then by the hypothesis
			\[ \lim_{\varepsilon \rightarrow 0+} \ev{\frac{f(\varepsilon x)}{\varepsilon^{\alpha} L(\varepsilon)}}{\psi_{c}(x)} = c_{+} \ev{x_{+}^{\alpha}}{\psi_{c}(x)} + c_{-} \ev{x_{-}^{\alpha}}{\psi_{c}(x)} . \]
		It suffices to show that the same limit holds for $\psi_{\pm}$ placed instead of $\psi_{c}$. As the two cases are symmetrical, we only look at $\psi_{+}$. It follows from (\ref{eq:slowlyvaryingsubpolynomialbound}), (\ref{eq:structoriginalpha>-1bound}) and the Lebesgue dominated convergence theorem that for any $m \geq k$,
			\begin{align*}
				\lim_{\varepsilon \rightarrow 0+} \ev{\frac{f_{m}^{(m)}(\varepsilon x)}{\varepsilon^{\alpha} L(\varepsilon)}}{\psi_{+}(x)}
				&
				= \lim_{\varepsilon \rightarrow 0+}  \int_{1}^{1 / \varepsilon} \frac{L(\varepsilon x)}{L(\varepsilon)} \left( \frac{f_{m}(\varepsilon x)}{(\varepsilon x)^{\alpha + m} L(\varepsilon x)} \right) x^{\alpha + m} \psi_{+}^{(m)}(x) dx \\
				&= c^{+}_{m} \int_{0}^{\infty} x^{\alpha} \psi_{+}(x) dx .
			\end{align*}
		Then another application of dominated convergence shows that
			\[ \lim_{\varepsilon \rightarrow 0+} \ev{\frac{f(\varepsilon x)}{\varepsilon^{\alpha} L(\varepsilon)}}{\psi_{+}(x)} = c_{+} \ev{x^{\alpha}_{+}}{\psi_{+}(x)} . \]
		This shows the case for $\alpha \notin \integers_{-}$. The case of negative integral degree can then be done as in the proof of \cite[Theorem 6.1]{StructThQADistrOrigin}.
\end{proof}

\subsection*{Acknowledgement}
We thank Andreas Debrouwere for his useful comments that led to improvements in the results of Section \ref{quasiasymptotics Gelfand-Shilov}.

\end{document}